\theoremstyle{plain}
\newtheorem{theorem}{Theorem}[section]
\newtheorem{proposition}[theorem]{Proposition}
\newtheorem{lemma}[theorem]{Lemma}
\newtheorem{corollary}[theorem]{Corollary}
\theoremstyle{remark}
\newtheorem{remark}[theorem]{Remark}
\theoremstyle{definition}
\newcommand{\R}{\mathbb{R}}
 \newcommand{\Pu}{\mathbb{P}}
\newcommand{\Vol}{\textup{Vol}}
\begin{document}

\title[On the isotropy constant of projections of polytopes]
{On the isotropy constant of projections of polytopes}

\author[D.\,Alonso]{David Alonso-Guti\'errez}
\address{Dpto. de Matem\'aticas, Facultad de Ciencias, Universidad de
Zaragoza, 50009 Zaragoza, Spain}\email[(David
Alonso)]{daalonso@unizar.es}
\thanks{Partially supported by Marie Curie RTN CT-2004-511953. The three first named authors partially supported by MCYT
Grant(Spain) MTM2007-61446 and DGA E-64. The fourth named author partially supported by National Science Foundation FRG grant DMS-0652722 (U.S.A.).}

\author[J.\,Bastero]{Jes\'us Bastero}\email[(Jes\'us
Bastero)]{bastero@unizar.es}

\author[J.\,Bernu\'es]{Julio Bernu\'es}
\email[(Julio Bernu\'es)]{bernues@unizar.es}

\author[P. Wolff]{Pawe\l{} Wolff}\email[(Pawe\l{} Wolff)]{pawel.wolff@case.edu}
\address{Department of Mathematics, Case Western Reserve University, Cleveland, OH 44106-7058, U.S.A.}

\subjclass[2000]{46B20 (Primary), 52A40, 52A39 (Secondary)}

\keywords{polytopes, projections, isotropy constant}

\begin{abstract} The isotropy constant of any $d$-dimensional polytope with $n$ vertices is bounded by $C \sqrt{n/d}$ where $C>0$ is a numerical constant.

\end{abstract}

\maketitle

\section {Introduction}

The boundedness of the isotropy constant (see definition below)
is a major conjecture in Asymptotic Geometric Analysis.
The answer is known to be positive for many families of convex bodies, see for instance \cite{MP}
or \cite{KK} and the references therein. In this paper we
focus our attention on the isotropy constant of polytopes or, equivalently,
of projections of the unit ball of $\ell_1^n$ space (in the symmetric case) and of the regular $n$-dimensional
simplex $S_n$ (in the non-symmetric case).\medskip

M.~Junge~\cite{J1} proved that the isotropy constant of all
orthogonal projections of $B^n_p$,  the unit ball of the
$\ell_p^n$ space, $1< p\le\infty$, is bounded by $Cp\prime$
an estimate improved to $C\sqrt{p\prime}$ in \cite{KM} ($p'$ is the conjugate exponent of $p$ and $C$ a numerical
constant). Later \cite[Theorem 4]{J2} M. Junge showed that the isotropy constant of any
symmetric
polytope with $2n$ vertices is bounded by $C\log n$, see also
\cite{milman-dual-mixed-volumes-and-slicing-problem}.\medskip

In a recent paper, \cite{KK}, B. Klartag and G. Kozma show the boundedness of the isotropy constant of random Gaussian polytopes. The integral over a polytope, which defines its isotropy constant, is computed by passing to an integral over its surface (faces). A consequence of their results is that ``most'' (see precise meaning below) $d$-dimensional projections of $B^n_1$ as well as of $S_n$ have bounded isotropy constant. When reading this statement one should have in mind
the well-known fact 
that \emph{every} symmetric convex body in $\R^d$ is ``almost" a projection of a $B_1^n$-ball, with possibly large $n$.  In the same spirit, positive answers for other random $d$-dimensional polytopes with $n\ (\ge Cd)$ vertices were given in \cite{A} and \cite{DGG}.\medskip

Our main theorem (Corollary \ref{isotropy-constant-polytopes}) states that for any $d$-dimensional polytope $K$ with $n$ vertices its isotropy constant $L_K$ verifies $$L_K\le C \sqrt{\frac{n}{d}}$$ where $C>0$ is a numerical constant.\medskip

We now pass to describe the contents of the paper. The second section introduces the geometric tool
(Proposition~\ref{betke:main-proposition}) necessary
to deal with integration on $d$-dimensional
projections of polytopes (Corollary~\ref{betke:main-corollary}).
Some time ago, one of the authors learned about this tool from Prof. Franck Barthe. The ideas originate from a paper
by U.~Betke~\cite{betke}, where a general result was
presented, namely a related formula for mixed volumes of two
polytopes. However, for the sake of completeness, we provide the
proof of the particular result we need. It also seems that the content of the proof is more geometric. \medskip

In the third Section we use these tools to prove our aforementioned main result (Corollary~\ref{isotropy-constant-polytopes}) by easily reducing it to the cases $K=P_E B^n_1$ or $K=P_E S_n$ (Theorem~\ref{isotropic-constant-of-proj-of-b1n-ball}) where $E\subset\R^n$ is any $d$-dimensional subspace and $P_E$ is the orthogonal projection onto $E$. Also in this Section we give a proof of the observation that for ``most'' subspaces, that is, for a subset $A$ of the Grassmann space $G_{n,d}$ of Haar probability measure $\ge 1-c_1e^{-c_2\max\{\log n,d\}}$, one has $L_{P_E B_1^n}<C $ and $ L_{P_E S_n}<C$ for every $E\in A$ with numerical constants $C,c_1,c_2$ (Proposition~\ref{prop-3-3}).\medskip

The next Section studies the isotropy constant of projections of
random polytopes with vertices on the sphere $S^{n-1}$. Using the techniques from Section 2 and \cite{A} we show that, with high probability, the isotropy constants of \emph{all} $d$-dimensional projections of random polytopes are bounded by $C\sqrt{\frac{n}{d}}$ (Proposition~\ref{prop-4-1}).\medskip

In the last Section we show that for every isotropic convex body, the isotropy constants of its hyperplane projections are comparable to the isotropy constant of the body itself (Corollary~\ref{AlonsoMP}). Recall that the analogous result for hyperplane \textit{sections} was already proved in \cite{MP}. The proof uses Steiner symmetrization in a similar way as it appears in \cite{BKM}, with better numerical constants.  In particular, we have $L_{P_H B^n_p}\le C$ for any hyperplane $H$ and $1<p<\infty$ improving Junge's estimate \cite{J1} for the case of hyperplanes. In \cite{ABBW} a different proof of this fact is given with the hope it might be extended to lower dimensional projections. \medskip

We recall that a convex body $K\subset\R^n$ is \emph{isotropic} if
it has volume $\Vol_n(K)=1$, the barycenter of $K$ is at the origin and its inertia matrix is a multiple
of the identity. Equivalently, there exists a constant $L_K>0$
called isotropy constant of $K$ such that $L_K^2=\int_K\langle
x,\theta\rangle^2\, dx, \forall \theta\in S^{n-1}$.

It is well known~\cite{MP}, that every convex body $K\subset\R^n$
has an affine transformation $K_1$ isotropic, so we can write
$L_K:=L_{K_1}$. This is well defined and moreover,

\begin{equation}\label{Milman-Pajor}
nL_K^2=\inf\left\{\frac{1}{\Vol_n(K)^{\frac2{n}+1}}\int_{a+TK}|x|^2\,
dx;\, a\in\R^n, T\in SL(n)\right\}\end{equation}

For a convex body $K\subset\R^n$,  $R(K)=\max\{|x|\, : x\in K\}$ and $r(K)=\min\{|x|\, : x\in
K\}$ are the circumradius and the inradius of $K$ respectively.

We will think of $S_n$ as an $n$-dimensional regular simplex in $\R^n$ with center of mass at the origin.
We will write $\Delta_n=\textup{conv}\{e_1,\dots ,e_{n+1}\}$ for the natural position of an $n$-dimensional regular simplex in $\R^{n+1}$.

The Lebesgue measure on an affine subspace $E$ will be denoted by $\lambda_E$.
For a measurable set $A \subseteq E$, if $d$ is a dimension of $E$, $\Vol_d(A)$ will stand for $\lambda_E(A)$.

The notation $a\sim b$ means $a\cdot c_1\le b\le a\cdot c_2$ for
some numerical constants $c_1, c_2>0$.

\section{Projections of polytopes}

Throughout this section, $K \subseteq \R^n$ is a polytope
(non-empty but possibly of empty interior), $E
\subseteq \R^n$ is a linear subspace of dimension $d$ ($1 \le d \le
n-1$) and $P_E$ is the orthogonal projection onto $E$.
\medskip

Let us fix some notation and recall necessary definitions (we follow the book by Schneider~\cite[Ch. 1, 2]{schneider}).
For a subset $A \subseteq \mathbb R^n$, $\textup{aff} \, A$ denotes the
minimal affine subspace which contains $A$. The dimension of a convex set $A$ is $\textup{dim} \, \textup{aff} \, A$.
When writing $\textup{relint} \, A$ we mean the relative interior of $A$ w.r.t. the topology of $\textup{aff} \, A$.
If $G \subseteq \mathbb R^n$ is an affine subspace then $G_0$ denotes the linear subspace
parallel to $G$. A convex subset $F \subseteq K$ of a polytope $K$ is called \emph{a face} if for any $x, y \in K$, $(x+y)/2 \in F$ implies
$x, y \in F$ (see also~\cite[Sec. 1.4, pp. 18]{schneider}). The set of $j$-dimensional
faces ($j$-faces, in short) of $K$ will be denoted as $\mathcal{F}_j(K)$ ($j = 0, 1, \ldots, n$), and $\mathcal{F}(K) =
 \bigcup_{j=0}^n \mathcal{F}_j(K) \cup \{ \emptyset \}$ is the set of all faces of $K$ ($\emptyset$ is also a face).
$K$ can be decomposed into a disjoint union of $\{ \textup{relint} \, F \, ; \; F \in \mathcal{F}(K) \}$
(see {\cite[Thm. 2.1.2]{schneider}}). For that reason for any $x \in K$ the unique face $F \in \mathcal{F}(K)$ such that $x \in \textup{relint}\, F$
will be denoted by $F(K, x)$.

For $x \in K$, \emph{a normal cone} of $K$ at $x$ is
\[
  N(K, x) = \{ u \in \mathbb R^n \, ; \; \forall_{z \in K} \, \langle z - x, u \rangle \le 0 \}.
\]
$N(K, x)$ is a closed convex cone. We shall also consider another closed convex cone, namely
\[
  S(K, x) = \bigcup_{\lambda > 0} \lambda (K-x).
\]
(In general, i.e. when $K$ is a convex body, $S(K, x)$ does not have to be closed.)
By~\cite[(2.2.1)]{schneider},
\begin{equation}
  \label{betke:crucial-duality-of-cones-1}
  N(K, x)^\ast = S(K, x),
\end{equation}
where the polarity used here is the polarity of convex cones, namely, if $C \subseteq \mathbb R^n$ is a convex cone,
\[
  C^\ast = \{ y \in \mathbb R^n \, ; \; \forall_{x \in C} \, \langle x, y \rangle \le 0 \}
\]
(see also~\cite[Sec. 1.6, pp. 34]{schneider}).
We shall also need to consider normal cones taken w.r.t. an affine subspace.
If $G$ is an affine subspace of $\mathbb R^n$ and $L \subseteq G$ is a convex body,
then for $x \in L$ we define a normal cone for $L$ at $x$ taken w.r.t. $G$:
\[
  N_G(L, x) = \{ u \in G_0 \, ; \; \forall_{z \in L} \, \langle z - x, u \rangle \le 0 \}.
\]
Note that $N_G(L, x) \subseteq G_0$. The similar duality relation to (\ref{betke:crucial-duality-of-cones-1}) holds:
\begin{equation}
  \label{betke:crucial-duality-of-cones-2}
  N_G(L, x)^{\ast_{G_0}} = S(L, x),
\end{equation}
where the polarity is taken w.r.t. $G_0$.

For any face $\emptyset \neq F \in \mathcal{F}(K)$, define
\[
  N(K,F) := N(K, x), \quad \textup{where } x \in \textup{relint} \, F.
\]
This definition does not depend on the choice of $x$
(see~\cite[Sec. 2.2., pp. 72]{schneider}).  $N_G(L,F)$ is
analogously defined.\medskip

For a given polytope $K \subseteq \R^n$ and a linear subspace $E \subseteq \R^n$ of dimension $d$, let us fix any
$u \in E^\perp \setminus \{0\}$ which satisfy
\begin{equation}
  \label{betke:condition-on-u}
  u \notin \bigcup \{ P_{E^\perp} N(K, F) \, ; \; F \in \mathcal{F}(K) \setminus \{\emptyset\}, \textup{dim} \, P_{E^\perp} N(K, F) \le n-d-1 \}.
\end{equation}
Clearly, such $u$ exists, since (\ref{betke:condition-on-u})
excludes only a finite union of sets of  dimension $< n-d$ from
$E^\perp$ which is of dimension $n-d$.\medskip

Consider the following subsets of $\mathcal{F}(K)$:
\[ \begin{split}
  \tilde{\mathcal{F}}(K, E, u) &:= \{ F \in \mathcal{F}(K) \, ; \; u \in P_{E^\perp} N(K, F) \}, \\
  \tilde{\mathcal{F}}_d(K, E, u) &:= \tilde{\mathcal{F}}(K, E, u) \cap \mathcal{F}_d(K).
\end{split} \]

\begin{proposition}
  \label{betke:main-proposition}
  Let $K\subset \R^n$ be a polytope, $E$ a $d$-dimensional subspace, $u\in E^{\perp}$ verifying (\ref{betke:condition-on-u}) and $\tilde{\mathcal{F}}(K, E, u)$  as described above. Then
  \begin{enumerate}
    \item[(a)] $\{ P_E (\textup{relint} \, F) \, ; \; F \in \tilde{\mathcal{F}}(K, E, u) \}$ is a family of pair-wise disjoint sets,
    \item[(b)] $\bigcup \{P_E F \, ; \; F\in\tilde{\mathcal{F}}(K, E, u)\} = P_E K$.
  \end{enumerate}
  Moreover, $\tilde{\mathcal{F}}(K, E, u) \subseteq \bigcup_{0 \le j \le d} \mathcal{F}_j(K)$ and
  for each $F \in \tilde{\mathcal{F}}_d(K, E, u)$, $P_E |F \colon F \to P_E F$ is an affine isomorphism.
\end{proposition}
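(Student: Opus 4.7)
The plan is to establish a key identification between two normal cones and then derive both (a) and (b) from it.

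\textbf{Step 1 (Key identity).} For any $x \in K$, I would show
\[
  N_{(P_E x) + E^\perp}\bigl(K \cap P_E^{-1}(P_E x),\, x\bigr) = P_{E^\perp} N(K, F(K, x)).
\]
The inclusion $\supseteq$ is direct: if $v \in N(K, x)$ and $z$ lies in the fiber, then $z - x \in E^\perp$, so $\langle z - x, P_{E^\perp} v \rangle = \langle z - x, v \rangle \le 0$. For equality, I would observe that $S(K \cap P_E^{-1}(P_E x), x) = S(K, x) \cap E^\perp$, take duals inside $E^\perp$ using (\ref{betke:crucial-duality-of-cones-2}), and compute that both sides have the same dual; since both are closed polyhedral cones, they coincide. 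As a consequence, $F(K, x) \in \tilde{\mathcal{F}}(K, E, u)$ if and only if $x$ maximizes $\langle \cdot, u \rangle$ on the fiber $K \cap P_E^{-1}(P_E x)$.

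\textbf{Step 2 (Dimension and injectivity).} For $F \in \tilde{\mathcal{F}}(K, E, u)$, condition (\ref{betke:condition-on-u}) forces $\dim P_{E^\perp} N(K, F) \ge n-d$. Combined with $\dim N(K, F) = n - \dim F$ (valid for any face of a full-dimensional polytope; the general case reduces to this), this yields $\dim F \le d$. A finer count shows $\dim(F_0^\perp \cap E) = d - \dim F$, which, taking orthogonal complements, is equivalent to $F_0 \cap E^\perp = \{0\}$. Thus $P_E|_F \colon F \to P_E F$ is injective and hence an affine isomorphism for \emph{every} $F \in \tilde{\mathcal{F}}(K, E, u)$ (not merely the $d$-faces), establishing the ``moreover'' clauses.

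\textbf{Step 3 (Part (b)).} Given $y \in P_E K$, the fiber $K_y := K \cap P_E^{-1}(y)$ is a nonempty compact convex polytope; pick any maximizer $x_0$ of $\langle \cdot, u \rangle$ on $K_y$. By Step 1, $F := F(K, x_0) \in \tilde{\mathcal{F}}(K, E, u)$, and $y = P_E x_0 \in P_E F$.

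\textbf{Step 4 (Part (a)).} Suppose for contradiction that $F \neq F'$ in $\tilde{\mathcal{F}}(K, E, u)$ and $y \in P_E(\textup{relint}\, F) \cap P_E(\textup{relint}\, F')$, with preimages $x \in \textup{relint}\, F$ and $x' \in \textup{relint}\, F'$. If $x = x'$, then $x$ lies in the relative interiors of two distinct faces of $K$, contradicting the disjoint decomposition of $K$; so $x \neq x'$. By Step 1 both $x$ and $x'$ maximize $\langle \cdot, u \rangle$ on $K_y$, hence so does the midpoint $m := (x+x')/2$. Let $F'' := F(K, m)$; again by Step 1, $F'' \in \tilde{\mathcal{F}}(K, E, u)$. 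The face property applied to $x, x' \in K$ with $(x+x')/2 \in F''$ gives $x, x' \in F''$, so $F, F' \subseteq F''$. By Step 2, $P_E|_{F''}$ is injective, but $P_E x = y = P_E x'$ forces $x = x'$, contradicting $x \neq x'$.

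The main obstacle is Step 2: one must chase dimensions carefully to extract both $\dim F \le d$ and the stronger statement $F_0 \cap E^\perp = \{0\}$ from condition~(\ref{betke:condition-on-u}), and one must be attentive in Step 1 to the fact that equality of closed polyhedral cones (not just of their duals) is needed.
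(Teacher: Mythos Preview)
Your argument is correct and tracks the paper's proof closely: your Step~1 is exactly the paper's Lemma~\ref{betke:projection-of-normal-cone-lemma} (together with the reformulation ``$u$ lies in the projected normal cone at $x$'' $\Leftrightarrow$ ``$x$ maximizes $\langle\cdot,u\rangle$ on the fiber''), and Step~3 is the paper's proof of~(b).

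There are two genuine, if minor, differences. First, in Step~2 you prove the stronger fact that $P_E|_F$ is injective for \emph{every} $F\in\tilde{\mathcal F}(K,E,u)$, not only for the $d$-faces as the paper asserts. Your ``finer count'' works because $N(K,F)\subseteq F_0^{\perp}$ forces
\[
n-d\;\le\;\dim P_{E^\perp}N(K,F)\;\le\;\dim P_{E^\perp}(F_0^{\perp})\;=\;(n-d)-\dim(F_0\cap E^{\perp}),
\]
whence $F_0\cap E^{\perp}=\{0\}$; it would be worth writing this line out explicitly. Second, your proof of~(a) in Step~4 exploits this stronger injectivity on the face $F''=F(K,m)$ containing the midpoint, whereas the paper instead combines Lemma~\ref{betke:intersection-of-normal-cones-lemma} with Lemma~\ref{betke:dim-of-faces-lemma} to conclude that the midpoint $m$ is a \emph{vertex} of the fiber polytope $K_y$, which directly forces $x=x'$. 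Both routes are short; yours has the pleasant by-product of the sharper injectivity statement, while the paper's avoids the linear-algebra identity $P_{E^\perp}(F_0^\perp)=(F_0\cap E^\perp)^{\perp_{E^\perp}}$.
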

In the proof of the proposition we shall use several lemmas.
\begin{lemma}
  \label{betke:projection-of-normal-cone-lemma}
  Let $L$ be a polytope in $\R^n$ and $G \subseteq \R^n$ be an affine subspace. If $x \in L \cap G$ then
  \[
    P_{G_0} N(L, x) = N_G(L \cap G, x).
  \]
\end{lemma}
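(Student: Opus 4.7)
The plan is to prove both inclusions in $P_{G_0} N(L, x) = N_G(L \cap G, x)$ separately. The inclusion $P_{G_0} N(L,x) \subseteq N_G(L \cap G, x)$ is immediate: given $v \in N(L,x)$ and any $z \in L \cap G$, since both $z$ and $x$ lie in $G$ we have $z - x \in G_0$, so $\langle z-x, v\rangle = \langle z-x, P_{G_0} v\rangle$, and the inequality $\langle z-x, v\rangle \le 0$ then gives $P_{G_0} v \in N_G(L\cap G, x)$.

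For the reverse inclusion the idea is to pass to polar cones within $G_0$ and apply the bipolar theorem, feeding in the duality relations (\ref{betke:crucial-duality-of-cones-1}) and (\ref{betke:crucial-duality-of-cones-2}). Two preliminary identities are needed. First, using $x \in G$ one has $L \cap G - x = (L-x) \cap G_0$, and pulling the intersection with the linear subspace $G_0$ out of the union over $\lambda > 0$ yields
\[
  S(L \cap G, x) = S(L, x) \cap G_0.
\]
Second, for any convex cone $C \subseteq \R^n$, the observation $\langle P_{G_0} c, y \rangle = \langle c, y \rangle$ for $y \in G_0$ gives
\[
  (P_{G_0} C)^{\ast_{G_0}} = C^\ast \cap G_0.
\]
Applying the latter with $C = N(L,x)$ and combining with (\ref{betke:crucial-duality-of-cones-1}) and (\ref{betke:crucial-duality-of-cones-2}) yields
\[
  (P_{G_0} N(L,x))^{\ast_{G_0}} = S(L,x) \cap G_0 = S(L \cap G, x) = N_G(L\cap G, x)^{\ast_{G_0}},
\]
so the bipolar theorem inside $G_0$ gives $\overline{P_{G_0} N(L,x)} = N_G(L\cap G, x)$.

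The main remaining point, which I expect to be the key technical subtlety and the one place the polytope hypothesis actually enters, is to remove the closure bar on the left. Since $L$ is a polytope, $N(L,x)$ is a finitely generated polyhedral cone, and the image of a polyhedral cone under a linear map is again polyhedral and hence closed. This completes the proof, and incidentally explains why the statement is restricted to polytopes: for a general convex body $L$, $P_{G_0} N(L,x)$ need not be closed and the duality argument above would only give equality up to closure.
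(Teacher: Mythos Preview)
Your proof is correct and follows essentially the same duality argument as the paper: take polars inside $G_0$, use $(P_{G_0} C)^{\ast_{G_0}} = C^\ast \cap G_0$, and identify both sides with $S(L\cap G, x)$ via (\ref{betke:crucial-duality-of-cones-1}) and (\ref{betke:crucial-duality-of-cones-2}). Your explicit verification that $P_{G_0} N(L,x)$ is closed (via polyhedrality) is a point the paper leaves implicit when it asserts that equality of polars is ``equivalent'' to the original statement, and your direct proof of the easy inclusion is redundant once the duality argument is in place, but neither changes the substance of the approach.
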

\begin{proof}
  By taking polars w.r.t. $G_0$ we see that the assertion is equivalent to
  \begin{equation}
    \label{betke:eq-for-normal-cones}
    N(L, x)^\ast \cap G_0 = N_G(L \cap G, x)^{\ast_{G_0}}
  \end{equation}
  (for the l.h.s. we used the fact that for a convex cone $C$, $(P_{G_0} C)^{\ast_{G_0}} = C^\ast \cap G_0$).
  Since $G_0 = G - x$ by~(\ref{betke:crucial-duality-of-cones-1}) we get
  \[ \begin{split}
    N(L, x)^\ast \cap G_0 &= S(L, x) \cap (G - x) = \bigcup_{\lambda > 0} \lambda (L-x) \cap (G - x) \\
                          &= S(L \cap G, x).
  \end{split} \]
  Applying~(\ref{betke:crucial-duality-of-cones-2}) we see that the r.h.s. of (\ref{betke:eq-for-normal-cones}) is also equal to
  $S(L \cap G, x)$.
\end{proof}
\begin{lemma}{\cite[Sec. 2.2]{schneider}}
  \label{betke:normal-cones-in-vertices-lemma}
  Let $L$ be a polytope contained in an affine subspace $G \subseteq \R^n$. Then
  \[
    \bigcup_{F \in \mathcal{F}_0(L)} N_G(L, F) = G_0.
  \]
\end{lemma}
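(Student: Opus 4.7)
The plan is to prove equality by showing two inclusions, with the nontrivial direction handled by the familiar fact that a linear functional on a polytope achieves its maximum at some vertex.

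The inclusion $\bigcup_{F \in \mathcal{F}_0(L)} N_G(L,F) \subseteq G_0$ is immediate from the very definition of $N_G$, which by construction produces a subset of $G_0$. For the reverse inclusion, fix an arbitrary $u \in G_0$. Since $L$ is a compact polytope and the linear functional $z \mapsto \langle z, u\rangle$ is continuous, it attains its maximum on $L$. Let
\[
  m := \max_{z \in L} \langle z, u \rangle, \qquad F^\ast := \{ x \in L \, ; \; \langle x, u\rangle = m \}.
\]
The set $F^\ast$ is a nonempty face of $L$: if $x,y \in L$ and $(x+y)/2 \in F^\ast$, then $\langle x,u\rangle + \langle y,u\rangle = 2m$ together with $\langle x,u\rangle,\langle y,u\rangle \le m$ forces $\langle x,u\rangle = \langle y,u\rangle = m$, so $x,y \in F^\ast$.

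Next I invoke the standard structural fact about polytopes: every nonempty face of a polytope contains a vertex. This follows, for instance, by induction on the dimension of the face, noting that if $F^\ast$ itself is not a vertex one may take a linear functional nonconstant on $F^\ast$ and pass to a proper subface of $F^\ast$ on which it is maximized; iterating produces a $0$-face. Pick any $v \in F^\ast \cap \mathcal{F}_0(L)$. Then for every $z \in L$,
\[
  \langle z - v, u \rangle = \langle z, u\rangle - m \le 0,
\]
and since $u \in G_0$ already, we conclude $u \in N_G(L, v) \subseteq \bigcup_{F \in \mathcal{F}_0(L)} N_G(L,F)$.

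There is no real obstacle here; the only point that requires care is the citation or quick justification of the ``every nonempty face contains a vertex'' fact, which the authors ascribe to Schneider's book anyway. The argument is a direct application of compactness of $L$ together with this structural property of polytopes.
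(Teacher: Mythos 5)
Your proof is correct, and it is the standard argument (maximize the linear functional $z\mapsto\langle z,u\rangle$ over the compact polytope, observe the argmax set is a face, pass to a vertex of that face). The paper does not give its own proof of this lemma — it simply cites Schneider's book — so there is nothing in the paper to compare against, but the argument you supply is precisely the one found in such references. The two small points you flag are handled correctly: the argmax set $F^\ast$ is a face under the paper's midpoint definition because it is the intersection of $L$ with a supporting hyperplane, and the fact that every nonempty face of a polytope contains a vertex is indeed standard (it follows, for instance, from the fact that a face of a polytope is again a polytope, hence the convex hull of its finitely many extreme points, which are vertices of $L$).
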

\begin{lemma}
  \label{betke:intersection-of-normal-cones-lemma}
  With the hypothesis as in the previous lemma, for $x, y \in L$,
  \[
    N_G(L, x) \cap N_G(L, y) = N_G(L, (x+y)/2).
  \]
\end{lemma}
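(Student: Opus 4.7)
The plan is to prove the two set inclusions directly from the defining inequality of $N_G$, namely $u \in N_G(L,z)$ iff $u \in G_0$ and $\langle w - z, u\rangle \le 0$ for every $w \in L$. Let $m = (x+y)/2$; since $L$ is convex, $m \in L$, so all three normal cones are well defined.

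For the inclusion $N_G(L, m) \subseteq N_G(L, x) \cap N_G(L, y)$, I would take $u \in N_G(L, m)$ and first test the defining inequality $\langle w - m, u\rangle \le 0$ at the two points $w = x$ and $w = y$, which belong to $L$. This gives $\langle (x-y)/2, u\rangle \le 0$ and $\langle (y-x)/2, u\rangle \le 0$, forcing $\langle x, u\rangle = \langle y, u\rangle = \langle m, u\rangle$. Then for any $w \in L$, $\langle w - x, u\rangle = \langle w - m, u\rangle \le 0$, so $u \in N_G(L,x)$, and symmetrically $u \in N_G(L, y)$.

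For the reverse inclusion $N_G(L, x) \cap N_G(L, y) \subseteq N_G(L, m)$, I would take $u$ lying in both cones and, for an arbitrary $w \in L$, simply add the inequalities $\langle w - x, u\rangle \le 0$ and $\langle w - y, u\rangle \le 0$; dividing by $2$ yields $\langle w - m, u\rangle \le 0$, hence $u \in N_G(L, m)$.

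I do not expect any real obstacle: the statement is a soft consequence of convexity and linearity of the pairing, and the only non-obvious observation is that testing at the extreme points $x, y$ pins $u$ to a level set of the form $\langle \cdot, u\rangle = c$ through $x, y, m$, which is what makes the inclusion $\supseteq$ work. No appeal to the polytopal structure, to polarity, or to Lemmas~\ref{betke:projection-of-normal-cone-lemma}--\ref{betke:normal-cones-in-vertices-lemma} seems necessary; the argument would go through for an arbitrary convex subset $L$ of $G$.
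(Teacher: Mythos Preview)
Your proof is correct and follows essentially the same approach as the paper's: the inclusion $N_G(L,x)\cap N_G(L,y)\subseteq N_G(L,m)$ is obtained by averaging the two inequalities, and the reverse inclusion by testing the normal-cone condition at the points $x$ and $y$ to force $\langle x-y,u\rangle=0$ and then rewriting $\langle w-x,u\rangle$ as $\langle w-m,u\rangle$. The only cosmetic difference is the order in which you present the two inclusions.
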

\begin{proof}
  The inclusion $\subseteq$ is immediate from the definition of a normal cone. For the converse inclusion take $u \in N_G(L, (x+y)/2)$.
  Then $\langle x - \frac{x+y}{2}, u \rangle \le 0$, $\langle y - \frac{x+y}{2}, u \rangle \le 0$, so
  $\langle x-y, u \rangle = 0$. Now, for all $z \in L$,
  \[
    \langle z-x, u \rangle = \langle z - \frac{x+y}{2}, u \rangle + \langle \frac{y-x}{2}, u \rangle \le 0,
  \]
  so $u \in N_G(L, x)$. Similarly $u \in N_G(L, y)$.
\end{proof}
\begin{lemma}{\cite[Sec. 2.4]{schneider}}
  \label{betke:dim-of-normal-cone-lemma}
  With the hypothesis as in Lemma~\ref{betke:normal-cones-in-vertices-lemma}, for $\emptyset \neq F \in \mathcal{F}(L)$,
  \[
    \textup{dim} \, N_G(L, F) = \textup{dim} \, G - \textup{dim} \, F
  \]
\end{lemma}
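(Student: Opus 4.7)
The plan is to reduce the dimension count of $N_G(L, F)$ to a lineality-space computation for the tangent cone, then identify this lineality space with $(\textup{aff}\, F)_0$ via the face-extremality property.

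First, I would pick any $x \in \textup{relint}\, F$, so that $N_G(L, F) = N_G(L, x)$ by definition. Since $L$ is a polytope, the tangent cone $S(L, x) = \bigcup_{\lambda > 0}\lambda(L-x)$ is a closed convex cone in $G_0$, and the duality~(\ref{betke:crucial-duality-of-cones-2}) together with the bipolar theorem yields $N_G(L, x) = S(L, x)^{\ast_{G_0}}$. The key linear-algebraic input I would invoke is the standard identity
\[
  \dim C^{\ast_{G_0}} \;=\; \dim G_0 \;-\; \dim\bigl(C \cap (-C)\bigr),
\]
valid for any closed convex cone $C \subseteq G_0$. Applied to $C = S(L, x)$, this reduces the problem to showing that the lineality space $S(L, x) \cap (-S(L, x))$ coincides with $(\textup{aff}\, F)_0$, which is a linear subspace of dimension $\dim F$.

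The two inclusions would then be established separately. The inclusion $(\textup{aff}\, F)_0 \subseteq S(L, x) \cap (-S(L, x))$ is essentially by definition of relative interior: a small neighborhood of $0$ in $(\textup{aff}\, F)_0$ sits inside both $F-x$ and $x-F$, and positive scaling recovers every direction in $(\textup{aff}\, F)_0$ in both signs. The converse is where the face property does the work: given $v$ in the lineality space, I would choose $\lambda > 0$ with $x \pm \lambda v \in L$; since $x = \tfrac12\bigl((x+\lambda v)+(x-\lambda v)\bigr)$ lies in $F$, the defining property of a face forces $x \pm \lambda v \in F$, whence $\lambda v \in F - x \subseteq (\textup{aff}\, F)_0$.

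Putting these together yields $\dim N_G(L, F) = \dim G - \dim F$. The only place the polytope hypothesis really enters is in ensuring that $S(L, x)$ is closed so that the bipolar identification and the dimension formula apply directly; the rest is pure cone duality combined with the local face-extremality property at a relative-interior point. I do not anticipate any serious obstacle — the argument is essentially a bookkeeping identification of the lineality space of the tangent cone, and the result is well known (cf.~\cite[Sec.~2.4]{schneider}).
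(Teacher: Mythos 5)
Your proof is correct, but note that the paper supplies no proof of its own for this lemma: it is cited directly from Schneider's book, and the remark that follows observes that only the inequality $\textup{dim}\, N_G(L,F) \le \textup{dim}\, G - \textup{dim}\, F$ is used later, which in turn follows in one line from $N_G(L,F) \subseteq \big((\textup{aff}\,F)_0\big)^{\perp_{G_0}}$. You prove the full equality by combining the bipolar identity $N_G(L,x) = S(L,x)^{\ast_{G_0}}$ with the standard dimension formula $\dim C^{\ast_{G_0}} = \dim G_0 - \dim\big(C \cap (-C)\big)$ for closed convex cones $C \subseteq G_0$, and then identifying the lineality space $S(L,x) \cap \big(-S(L,x)\big)$ with $(\textup{aff}\,F)_0$ via the face property at $x \in \textup{relint}\,F$; this is a sound and essentially the textbook argument, and you are right that the polytope hypothesis enters precisely to make $S(L,x)$ closed. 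One small point deserves a sentence: when you ``choose $\lambda>0$ with $x \pm \lambda v \in L$'' for $v$ in the lineality space, the scalars witnessing $v \in S(L,x)$ and $-v \in S(L,x)$ may a priori differ, and you should invoke convexity of $L$ to replace both by their minimum before applying the face condition to $x = \tfrac12\big((x+\lambda v)+(x-\lambda v)\big)$. With that clarification the argument is complete, and it gives a self-contained proof where the paper only cites.
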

\begin{remark}
  Actually we shall use only the inequality $\textup{dim} \, N_G(L, F) \le \textup{dim} \, G - \textup{dim} \, F$ which simply
  follows from the fact $N_G(L, F) \subseteq \big((\textup{aff} \, F)_0\big)^{\perp_{G_0}}$.
\end{remark}
\begin{lemma}
  \label{betke:dim-of-faces-lemma}
  Let $K \subseteq \R^n$ be a polytope, $E \subseteq \R^n$ be a linear subspace of dimension $d$
  and $u \in E^\perp$ satisfies~(\ref{betke:condition-on-u}).
  Let $y \in K$, $x = P_E y \in E$, $K_x = K \cap (x + E^\perp)$
  ($K_x$ is a polytope in $x + E^\perp$).
  If one of the equivalent condition holds:
  \begin{enumerate}
    \item[(i)] $u \in P_{E^\perp} N(K, y)$,
    \item[(ii)] $u \in N_{x + E^\perp}(K_x, y)$,
  \end{enumerate}
  then $\{y\} \in \mathcal{F}_0(K_x)$ and $\textup{dim} \, F(K,y) \le d$.
\end{lemma}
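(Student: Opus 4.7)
The plan is to chain the preceding lemmas in three short moves. Throughout, I would set $F := F(K, y)$ and $F_x := F(K_x, y)$, so that $N(K,y) = N(K, F)$ and $N_{x+E^\perp}(K_x, y) = N_{x+E^\perp}(K_x, F_x)$.

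For the equivalence of (i) and (ii), I would apply Lemma~\ref{betke:projection-of-normal-cone-lemma} to the polytope $K$ and the affine subspace $x + E^\perp$ (whose direction is $E^\perp$) at the point $y \in K \cap (x+E^\perp)$: the lemma yields $P_{E^\perp} N(K, y) = N_{x+E^\perp}(K_x, y)$, which makes (i) and (ii) literally the same statement. Combined with the identifications above, $u$ then lies in the common value $P_{E^\perp} N(K, F) = N_{x+E^\perp}(K_x, F_x)$, and condition~(\ref{betke:condition-on-u}) forbids $\dim P_{E^\perp} N(K, F) \le n-d-1$. Since $P_{E^\perp} N(K, F) \subseteq E^\perp$ has dimension at most $n-d$ anyway, I would conclude that $\dim P_{E^\perp} N(K, F) = n-d$, and hence also $\dim N_{x+E^\perp}(K_x, F_x) = n-d$.

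Both conclusions then fall out from the dimension inequality in Lemma~\ref{betke:dim-of-normal-cone-lemma} (the only form needed, per the preceding remark). Applied to $K_x$ inside $x+E^\perp$, it gives $n-d = \dim N_{x+E^\perp}(K_x, F_x) \le (n-d) - \dim F_x$, forcing $\dim F_x = 0$, i.e., $\{y\} = F_x \in \mathcal{F}_0(K_x)$. Applied to $K$ inside $\R^n$, together with the fact that orthogonal projection cannot raise dimension, it gives $n - d \le \dim N(K, F) \le n - \dim F$, whence $\dim F(K, y) \le d$. The whole argument rests on accurately tracking the ambient space in which each normal cone is taken; Lemma~\ref{betke:projection-of-normal-cone-lemma} supplies the one nontrivial identification, and the remaining steps are a dimension count driven by~(\ref{betke:condition-on-u}). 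This bookkeeping is the main (and only minor) obstacle; once it is in place, no single step is substantive.
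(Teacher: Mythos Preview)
Your proposal is correct and follows essentially the same route as the paper: you use Lemma~\ref{betke:projection-of-normal-cone-lemma} for the equivalence of (i) and (ii), invoke condition~(\ref{betke:condition-on-u}) to force $\dim P_{E^\perp} N(K,F) \ge n-d$, and then apply the dimension inequality of Lemma~\ref{betke:dim-of-normal-cone-lemma} once to $K_x$ in $x+E^\perp$ and once to $K$ in $\R^n$. The only cosmetic difference is that you note the upper bound $\dim P_{E^\perp} N(K,F)\le n-d$ to get equality rather than just the inequality, which is harmless.
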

\begin{proof}
  The conditions (i) and (ii) are equivalent by Lemma~\ref{betke:projection-of-normal-cone-lemma}.
  Consider $F = F(K, y)$ and $F' = F(K_x, y)$.
  By the condition~(\ref{betke:condition-on-u}) on $u$,
  \[
    \textup{dim} \, P_{E^\perp} N(K, F) \ge n-d,
  \]
  so $\textup{dim} \, N_{x+E^\perp}(K_x, F') \ge n-d$ and also $\textup{dim} \, N(K, F) \ge n-d$.
  Therefore Lemma~\ref{betke:dim-of-normal-cone-lemma} applied to $K_x$ and $F'$ implies $\textup{dim} \, F' = 0$, so $\{y\} = F' \in \mathcal{F}_0(K_x)$. Eventually, applying the same lemma to $K$ and $F$ yields $\textup{dim} \, F \le d$.
\end{proof}
\noindent\emph{Proof of Proposition~\ref{betke:main-proposition}.}
  \paragraph{(a)}
    Take $F_1, F_2 \in \tilde{\mathcal{F}}(K, E, u)$ such that for some $x \in E$,
    \[
      x \in P_E (\textup{relint} \, F_1) \cap P_E (\textup{relint} \, F_2)
    \]
    which means that for $i=1,2$ one can find $y_i \in x + E^\perp$ that $y_i \in \textup{relint} \, F_i$ and then
    \[
      u \in P_{E^\perp} N(K, F_i) = P_{E^\perp} N(K, y_i).
    \]
    Consider a convex polytope $K_x = K \cap (x + E^\perp)$. Lemma~\ref{betke:dim-of-faces-lemma} implies that
    $\{y_1\}, \{y_2\} \in \mathcal{F}_0(K_x)$ and also
    \[ \begin{split}
      u &\in N_{x+E^\perp}(K_x, y_1) \cap N_{x+E^\perp}(K_x, y_2) \\
        &=   N_{x+E^\perp}(K_x, (y_1+y_2)/2),
    \end{split} \]
    where the last equality is due to Lemma~\ref{betke:intersection-of-normal-cones-lemma}.
    But again, Lemma~\ref{betke:dim-of-faces-lemma} implies that also $\{(y_1+y_2)/2\} \in \mathcal{F}_0(K_x)$, hence $y_1 = y_2$
    (see definition of a face) and consequently, $F_1 = F(K, y_1) = F_2$.
  \paragraph{(b)}
    The inclusion ``$\subseteq$'' is obvious. For the inclusion ``$\supseteq$'' take arbitrary $x \in P_E K$.
    Put $K_x = K \cap (x+E^\perp)$. $K_x$ is a non-empty polytope in $x + E^\perp$. By Lemma~\ref{betke:normal-cones-in-vertices-lemma} one can
    find $y \in x+E^\perp$ such that $\{y\} \in \mathcal{F}_0(K_x)$ and $u \in N_{x+E^\perp}(K_x, y)$. Lemma~\ref{betke:dim-of-faces-lemma}
    (or just Lemma~\ref{betke:projection-of-normal-cone-lemma}) implies $u \in P_{E^\perp} N(K, F)$ where $F = F(K, y)$.
    Consequently, $F \in \tilde{\mathcal{F}}(K, E, u)$.

    By the definition of $\tilde{\mathcal{F}}(K, E, u)$ and Lemma~\ref{betke:dim-of-faces-lemma}, any face $F \in \tilde{\mathcal{F}}(K, E, u)$
    has dimension $\le d$.

    Finally we show that for $F \in \tilde{\mathcal{F}}_d(K, E, u)$, $P_E | F \colon F \to P_EF$ is an isomorphism.
    The condition~(\ref{betke:condition-on-u}) and Lemma~\ref{betke:dim-of-normal-cone-lemma} implies
    \[
      n-d \le \textup{dim} \, P_{E^\perp} N(K, F) \le \textup{dim} \, N(K, F) \le n - \textup{dim}\,F = n-d,
    \]
    so $\textup{dim} \, P_{E^\perp} N(K, F) = \textup{dim} \, N(K, F) = n-d$. This means
    $\big(\textup{span} \, N(K, F)\big) \cap E = \{0\}$ and $N(K, F)^\perp \cap E^\perp = \{0\}$. The definition of a normal cone
    yields $(\textup{aff} \, F)_0 \subseteq N(K, F)^\perp$, which finally gives $(\textup{aff} \, F)_0 \cap E^\perp = \{0\}$.
\null\hfill$\Box$\bigskip

The following corollary is an immediate consequence of
Proposition~\ref{betke:main-proposition}.
\begin{corollary}
  \label{betke:main-corollary}
  Let $K \subseteq \R^n$ be a convex polytope, $E \subseteq \R^n$ a $d$-dimensional linear subspace ($1 \le d \le n-1$).
  Then there exists a subset $\tilde{\mathcal{F}}$ of $\mathcal{F}_d(K)$ such that for any integrable function $f \colon E \to \R$,
  \begin{equation}
    \label{betke:main-equation}
    \int_{P_E K} f(x) \, \lambda_E(dx) =
      \sum_{F \in \tilde{\mathcal{F}}} \frac{\Vol_d(P_E F)}{\Vol_d(F)} \int_F f(P_E y) \, \lambda_{\textup{aff} F}(dy).
  \end{equation}
  In particular (for $f \equiv 1$),
  \[
    \Vol_d(P_E K) = \sum_{F \in \tilde{\mathcal{F}}} \Vol_d(P_E F).
  \]
\end{corollary}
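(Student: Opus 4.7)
The plan is to pick $u \in E^\perp$ satisfying (\ref{betke:condition-on-u}) (such $u$ exists by the comment following that display), set $\tilde{\mathcal F} := \tilde{\mathcal F}_d(K, E, u)$, and then derive the identity by combining Proposition~\ref{betke:main-proposition} with a standard affine change of variables on each $d$-dimensional face.

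First I would use Proposition~\ref{betke:main-proposition} to decompose $P_E K$. By part (b), $P_E K = \bigcup \{ P_E F \,;\, F \in \tilde{\mathcal F}(K, E, u) \}$; by part (a) the relative interiors $P_E(\textup{relint}\, F)$ are pairwise disjoint; and the proposition also guarantees that every $F \in \tilde{\mathcal F}(K, E, u)$ has $\textup{dim}\, F \le d$. Faces $F$ of dimension strictly less than $d$ project to sets of dimension $< d$ in $E$, which are $\lambda_E$-null and can therefore be discarded. Also, $P_E(F \setminus \textup{relint}\, F)$ lies in the union of projections of proper sub-faces and is again $\lambda_E$-null. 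Hence
\[
  \int_{P_E K} f(x) \, \lambda_E(dx) = \sum_{F \in \tilde{\mathcal F}_d(K, E, u)} \int_{P_E F} f(x) \, \lambda_E(dx).
\]

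Next I would process each summand via change of variables. For $F \in \tilde{\mathcal F}_d(K, E, u)$, Proposition~\ref{betke:main-proposition} tells us that $P_E|F \colon F \to P_E F$ is an affine isomorphism, so it extends to an affine bijection $P_E|_{\textup{aff}\, F} \colon \textup{aff}\, F \to E$ with constant Jacobian. That Jacobian is determined by how it transforms $d$-dimensional volume, hence equals $\Vol_d(P_E F)/\Vol_d(F)$. The change of variables formula gives
\[
  \int_{P_E F} f(x) \, \lambda_E(dx) = \frac{\Vol_d(P_E F)}{\Vol_d(F)} \int_F f(P_E y) \, \lambda_{\textup{aff}\, F}(dy),
\]
and summing over $F \in \tilde{\mathcal F}$ yields (\ref{betke:main-equation}). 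The ``in particular'' statement follows by taking $f \equiv 1$.

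There is no real obstacle here: Proposition~\ref{betke:main-proposition} has done all of the geometric work, and what remains is essentially measure-theoretic bookkeeping. The only point that needs a moment of care is verifying that the contributions of faces of dimension $< d$ (and the boundaries of $d$-faces) are genuinely negligible for $\lambda_E$; this is immediate because each such projected set lies in a proper affine subspace of $E$.
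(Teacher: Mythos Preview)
Your proof is correct and follows essentially the same route as the paper: choose $u$ satisfying~(\ref{betke:condition-on-u}), set $\tilde{\mathcal F}=\tilde{\mathcal F}_d(K,E,u)$, decompose the integral over $P_E K$ using Proposition~\ref{betke:main-proposition}, and apply the affine change of variables on each $d$-face. You have in fact supplied more detail than the paper's own proof, which compresses the null-set and Jacobian observations into a single line.
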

\begin{proof}
  Choose any $u \in E^\perp$ satisfying (\ref{betke:condition-on-u}) for $K$ and $E$ and put $\tilde{\mathcal{F}} = \tilde{\mathcal{F}}_d(K, E, u)$.
  By Proposition~\ref{betke:main-proposition},
  \[ \begin{split}
    \int_{P_E K} f(x) \, \lambda_E(dx) &= \sum_{F \in \tilde{\mathcal{F}}} \int_{P_E F} f(x) \, \lambda_E(dx) \\
                                     &= \sum_{F \in \tilde{\mathcal{F}}}
                                       \frac{\Vol_d(P_E F)}{\Vol_d(F)} \int_{F} f(P_E y) \, \lambda_{\textup{aff} F}(dy).
  \end{split} \]
\end{proof}
For our purposes we shall use above corollary with $f(x) = |x|^2$.
In such case, the obvious inequality $|P_E y| \le |y|$ and the identity
$\displaystyle 1= \sum_{F \in \tilde{\mathcal{F}}}
\frac{\Vol_d(P_E F)}{\Vol_d(P_E K)}$  lead to the following
estimate: if $P_E K$ is a body of dimension $d$ (i.e. is
non-degenerated) then
\begin{equation}
  \label{betke:main-inequality}
     \frac{1}{\Vol_d(P_EK)} \int_{P_E K} |x|^2 \, \lambda_E(dx) \le
      \max_{F \in \tilde{\mathcal{F}}} \frac{1}{\Vol_d(F)} \int_F |y|^2 \, \lambda_{\textup{aff} F}(dy).
\end{equation}

\section{Projections of the $\ell^n_1$-ball and the regular simplex}

\bigskip
First of all, we are going to see that ``most'' projections of $B_1^n$ on $d$-dimensional subspaces
($d \le n$) have the isotropy constant bounded. It is well known that any symmetric convex polytope in $\R^d$ with $2n$
vertices is linearly equivalent to $P_E B_1^n$ for some $E\in G_{n,d}$.
Indeed, if $T \colon \R^n \to \R^d$ is a linear transformation
of full rank, then taking the $d$-dimensional subspace $E = (\textup{ker}\,T)^\perp \subseteq \R^n$, $T$ can be represented as $T_{|E} P_E$
where $T_{|E} \colon E \to \R^d$ is a linear isomorphism
being a restriction of $T$ to the subspace $E$. As an immediate consequence we obtain
the following
\begin{lemma}
  \label{symm-polytope-equivalence}
Let $K=\textup{conv}\{\pm v_1,\dots,\pm v_n\}\subseteq\R^d$ be a symmetric convex polytope with non-empty interior
and let $T \colon \R^n\to\R^d$ be the linear map such that $Te_i=v_i$.
Then for $E=(\textup{ker}\,T)^\perp \in G_{n,d}$, $P_E B_1^n$ and
$K$ are linearly equivalent.
\end{lemma}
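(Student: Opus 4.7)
The statement is essentially a direct consequence of the general observation about factoring linear maps that is made in the paragraph immediately preceding the lemma, so the plan is just to apply that observation to the specific map $T$ and unwind the definitions.

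First I would verify that $T$ has full rank $d$. Since $K=\textup{conv}\{\pm v_1,\dots,\pm v_n\}$ has non-empty interior in $\R^d$, the vectors $v_1,\dots,v_n$ must span $\R^d$. As $Te_i=v_i$, the image of $T$ contains a spanning set of $\R^d$, so $T$ is surjective, hence of full rank $d$. Consequently $E=(\ker T)^\perp$ is a $d$-dimensional linear subspace of $\R^n$, so $E\in G_{n,d}$.

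Next I would observe that $T(B_1^n)=K$. This is because $B_1^n=\textup{conv}\{\pm e_1,\dots,\pm e_n\}$, and linear maps commute with taking convex hulls, so
\[
T(B_1^n)=\textup{conv}\{\pm T(e_1),\dots,\pm T(e_n)\}=\textup{conv}\{\pm v_1,\dots,\pm v_n\}=K.
\]
Then I would invoke the factorization already described in the text: using the orthogonal decomposition $\R^n=E\oplus\ker T$, for any $x\in\R^n$ one has $Tx=T(P_E x+P_{\ker T}x)=T(P_E x)=T_{|E}(P_E x)$, where $T_{|E}\colon E\to\R^d$ denotes the restriction of $T$ to $E$. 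Since $\dim E=d=\dim\R^d$ and $T_{|E}$ is clearly injective (its kernel is $E\cap\ker T=\{0\}$), the map $T_{|E}$ is a linear isomorphism.

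Finally I would combine these pieces to conclude: $K=T(B_1^n)=T_{|E}(P_E B_1^n)$, which exhibits the linear isomorphism $T_{|E}$ as sending $P_E B_1^n$ onto $K$. There is no real obstacle here; the whole argument is a few lines of bookkeeping, and the only step requiring a brief justification is the full-rank claim, which hinges on the non-empty interior hypothesis on $K$.
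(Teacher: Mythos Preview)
Your proof is correct and follows exactly the approach the paper takes: the lemma is stated there as an immediate consequence of the factorization $T = T_{|E} P_E$ described in the preceding paragraph, and you have simply spelled out the details (full rank of $T$ from the non-empty interior hypothesis, and $T(B_1^n)=K$). There is nothing to add.
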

One may also prove a similar lemma in the non-symmetric case. Recall that  $\Delta_n=\textup{conv}\{e_1,\dots ,e_{n+1}\}\subseteq H \subseteq \R^{n+1}$ where $H$, as in the whole of this section, denotes the hyperplane orthogonal to the vector $(1, \ldots, 1) \in \R^{n+1}$.
\begin{lemma}
  \label{non-symm-polytope-equivalence}
Let $K=\textup{conv}\{v_1, \ldots, v_{n+1}\} \subseteq \R^d$ ($n \ge d$) be a convex polytope with non-empty interior.
Let $T \colon \R^{n+1} \to \R^d$ be the linear map that $T e_i = v_i - v_0$ where $v_0 = \frac{1}{n+1}\sum_{i=1}^{n+1} v_i$
and $E = (\textup{ker}\,T)^\perp \subseteq \R^{n+1}$. Then $E \subseteq H$ is a subspace of dimension $d$ and
$K - v_0$ is linearly equivalent to $P_E \Delta_n$.
Consequently, $K$ is affinely equivalent to some orthogonal projection of the $n$-dimensional regular simplex $S_n$
onto a $d$-dimensional subspace.
\end{lemma}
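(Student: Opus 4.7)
The plan is to mirror the one-line argument given right before Lemma~\ref{symm-polytope-equivalence} for the symmetric case, with the translation $K\mapsto K-v_0$ playing the role that centering at the origin played there. The key factorization $T = T_{|E}\circ P_E$ (valid whenever $E = (\textup{ker}\,T)^\perp$, since $T$ vanishes on $\textup{ker}\,T$) does the main work; the non-symmetric twist is that $K$ is not $T(\Delta_n)$ but only its translate $K-v_0$, and one additionally has to verify that $E$ lies inside $H$.

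First I would observe that by the very definition of $v_0$,
$$T(e_1+\cdots+e_{n+1}) = \sum_{i=1}^{n+1}(v_i-v_0) = 0,$$
so $(1,\ldots,1)\in\textup{ker}\,T$ and hence $E=(\textup{ker}\,T)^\perp\subseteq H$. Since $K$ has nonempty interior in $\R^d$, the differences $v_i-v_0$ span $\R^d$, so $T$ is surjective of rank $d$, giving $\textup{dim}\,E = d$. Applying the factorization to $\Delta_n$ then yields
$$K - v_0 = T(\Delta_n) = T_{|E}\bigl(P_E\Delta_n\bigr),$$
and since $T_{|E}\colon E\to\R^d$ is a linear isomorphism, $K-v_0$ and $P_E\Delta_n$ are linearly equivalent, settling the first assertion.

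For the final statement I would fix any linear isometry $\Phi\colon H\to\R^n$, so that $\Phi(\Delta_n-\bar v_0)$ (with $\bar v_0 = \frac{1}{n+1}(1,\ldots,1)$ the centroid of $\Delta_n$) is a congruent copy of the centered regular simplex $S_n$, and $E':=\Phi(E)\subseteq\R^n$ is a $d$-dimensional linear subspace. Because $\bar v_0\perp H\supseteq E$, one has $P_E\bar v_0 = 0$, hence $P_E\Delta_n = P_E(\Delta_n-\bar v_0)$; and since $\Phi$ is a linear isometry carrying $E$ onto $E'$, it intertwines orthogonal projection onto $E$ (applied to vectors in $H$) with orthogonal projection onto $E'$ in $\R^n$. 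Therefore $\Phi(P_E\Delta_n) = P_{E'}S_n$, and composing with the affine map $y\mapsto T_{|E}(\Phi^{-1}y)+v_0$ exhibits $K$ as affinely equivalent to $P_{E'}S_n$. I do not foresee any serious obstacle: the only mildly delicate point is the last intertwining, which reduces to the routine observation that an orthogonal projection in $\R^{n+1}$ onto a subspace of $H$, restricted to vectors in $H$, coincides with the intrinsic orthogonal projection inside $H$, and that linear isometries preserve orthogonal projections.
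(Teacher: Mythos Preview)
Your proof is correct and follows essentially the same approach as the paper's: both identify $(1,\ldots,1)\in\ker T$ to get $E\subseteq H$, use non-empty interior to get full rank and $\dim E=d$, and invoke the factorization $T=T_{|E}\circ P_E$ from the paragraph preceding Lemma~\ref{symm-polytope-equivalence}. You have simply spelled out the details (including the ``Consequently'' clause about $S_n$) that the paper leaves to the phrase ``the argument given above applies.''
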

\begin{proof}
  Clearly $(1, \ldots, 1) \in \textup{ker}\,T$, so $E \subseteq H$. Since $K$ has non-empty interior,
  vectors $v_i - v_0$ span the whole of $\R^d$, so $T$ is of full rank. Therefore $\textup{dim}\,E = d$
  and the argument given above applies.
\end{proof}

Now we can prove the following result concerning the isotropy constant of random projections of $B_1^n$ and $S_n$.
\begin{proposition}
  \label{prop-3-3}
There exist absolute constants $C, c_1, c_2 >0$ such that the Haar probability measure of the set of subspaces $E\in G_{n,d}$ verifying
$$
L_{P_E B_1^n}<C \qquad\textup{and}\qquad L_{P_E S_n}<C
$$
is greater than $1-c_1e^{-c_2\max\{\log n,d\}}$.
\end{proposition}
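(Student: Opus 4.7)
The idea is to apply the Betke-type inequality~(\ref{betke:main-inequality}) with $K=B_1^n$, combined with two probabilistic ingredients about a Haar-random $E\in G_{n,d}$: concentration of the vertex norms $|P_E e_i|$ and a lower bound for $\Vol_d(P_E B_1^n)$. Since $P_E B_1^n$ is symmetric about $0$, (\ref{Milman-Pajor}) with $a=0$, $T=I$ yields
\[
d\, L_{P_E B_1^n}^2 \le \frac{1}{\Vol_d(P_E B_1^n)^{1+2/d}} \int_{P_E B_1^n} |x|^2 \, \lambda_E(dx).
\]
Every $d$-face of $B_1^n$ has the form $F=\textup{conv}\{\varepsilon_1 e_{i_1},\ldots,\varepsilon_{d+1} e_{i_{d+1}}\}$, so for any $y\in F$, $|P_E y|\le \max_j |P_E e_{i_j}|\le M:=\max_{1\le i\le n}|P_E e_i|$. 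Combining this with~(\ref{betke:main-inequality}) gives
\[
L_{P_E B_1^n}^2 \le \frac{M^2}{d\,\Vol_d(P_E B_1^n)^{2/d}}.
\]

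To control the numerator, I would use that $|P_E e_i|^2$ has the same distribution as $\sum_{j=1}^d u_j^2$ for $u=(u_1,\ldots,u_n)$ uniform on $S^{n-1}$; this is a Beta-type random variable with mean $d/n$ and subgaussian tails at scale $1/n$. A union bound over $i=1,\ldots,n$ then yields
\[
\Pu\!\left(M^2 > C\,\frac{\max\{d,\log n\}}{n}\right) \le c_1\, e^{-c_2 \max\{d,\log n\}}.
\]
(Indeed, for $d\ge \log n$ the deviation is governed by the mean, and for $d\le \log n$ the subgaussian tail dominates.)

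The main obstacle is the complementary reverse volume estimate
\[
\Vol_d(P_E B_1^n)^{1/d} \ge c\,M/\sqrt{d}
\]
with probability $\ge 1-c_1 e^{-c_2 \max\{d,\log n\}}$, i.e.\ that the symmetric random polytope $\textup{conv}\{\pm P_E e_i\}_{i=1}^n$ fills a definite proportion of the ball $M\cdot(B_2^d\cap E)$. Such an inequality is available from the theory of random polytopes, cf.\ \cite{A} and \cite{DGG} and the Klartag--Kozma approach \cite{KK}; reducing to an inradius lower bound, the heart of the argument is a standard $\varepsilon$-net on $S_E^{d-1}$ combined with small-ball / anti-concentration estimates for the individual projections $P_E e_i$. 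Once both inequalities are in place, the displayed bound for $L_{P_E B_1^n}^2$ collapses to an absolute constant.

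For $L_{P_E S_n}$ the same scheme applies after invoking Lemma~\ref{non-symm-polytope-equivalence} to identify $S_n$ with $\Delta_n\subseteq H\subseteq \R^{n+1}$ (which is already centered): the vertices of $\Delta_n$ lie on $S^n$, the $d$-faces are again simplices spanned by subsets of $\{e_1,\ldots,e_{n+1}\}$, and a Haar-random $E\in G_{n,d}$ corresponds to a uniformly random $d$-dimensional subspace of $H$, so the vertex concentration and the volume lower bound transfer verbatim, yielding $L_{P_E S_n}\le C$ on an event of the same probability.
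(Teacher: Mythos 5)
Your plan diverges from the paper's proof and, as sketched, contains a genuine gap.

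The paper's argument is much shorter: via Lemma~\ref{symm-polytope-equivalence} (resp.\ Lemma~\ref{non-symm-polytope-equivalence}), a Haar-random $E\in G_{n,d}$ can be realised as $(\ker G)^\perp$ for a $d\times n$ Gaussian matrix $G$, and then $L_{P_E B_1^n}=L_{\textup{conv}(\pm Ge_1,\ldots,\pm Ge_n)}$ a.s.\ by affine invariance. The Klartag--Kozma theorem on random Gaussian polytopes is then applied as a black box, and the regime $d\le c\log n$ is handled separately via Dvoretzky's theorem. No face decomposition, no $\varepsilon$-nets, no volume lower bounds.

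The trouble with your route is the intermediate inequality $L_{P_E B_1^n}^2\le M^2\big/\big(d\,\Vol_d(P_E B_1^n)^{2/d}\big)$ obtained by bounding $|P_E y|\le M$ for $y$ in a $d$-face $F$ of $B_1^n$. This step is lossy by roughly a factor of $d$: for a random subspace the vectors $P_E e_i$ are nearly orthogonal, so $\frac{1}{\Vol_d(F)}\int_F |P_E y|^2\,dy$ is of order $M^2/d$ rather than $M^2$ (this is exactly the cross-term cancellation exploited in~(\ref{random-polytopes:identity-for-faces})--(\ref{random-bodies:estimate-for-single-face}) of Section~4). To absorb that loss you would need the volume lower bound $\Vol_d(P_E B_1^n)^{1/d}\ge cM/\sqrt d$, and that estimate is simply false with high probability once $d$ is much larger than $\log n$: one has $M\asymp\sqrt{(d+\log n)/n}$ while $\Vol_d(P_E B_1^n)^{1/d}\asymp\sqrt{\log(n/d)/(nd)}$ (identify $P_E B_1^n$, up to isometry, with $\textup{conv}(\pm u_1,\ldots,\pm u_n)$ for the columns $u_i$ of a random co-isometry, which behaves like $\textup{conv}(\pm g_i/\sqrt n)$), so $\Vol^{1/d}\big/(M/\sqrt d)\asymp\sqrt{\log(n/d)/(d+\log n)}\to 0$. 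Even in the deterministic case $K=B_1^d$ (take $E=\R^d$, $n=d$) the proposed inequality reads $2/(d!)^{1/d}\ge c/\sqrt d$, which fails for large $d$. So no combination of concentration and $\varepsilon$-net arguments can rescue the displayed chain; the crude bound on the face integral must first be replaced by the cross-term estimate, after which one needs to control $\max_F\sum_{i\ne j}\langle P_Ee_{i},P_Ee_{j}\rangle$ uniformly over faces -- at which point you are essentially reproving the result of Klartag--Kozma rather than citing it. The paper's reduction sidesteps all of this.

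A secondary point: even with the corrected face-integral bound and the deterministic volume bound $\Vol^{1/d}\ge c/\sqrt{nd}$, one only gets $L^2\lesssim\max\{d,\log n\}/d$, which is $O(1)$ for $d\gtrsim\log n$ but blows up for small $d$; that regime still needs a separate argument (the paper uses Dvoretzky), which your sketch does not mention.
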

\begin{proof}

For small values of $d$, namely $d\leq c\log n$, the isotropy constant of a random projection is bounded by an absolute constant with probability greater than $1-\frac{c_1}{n^{c_2}}$ as a consequence of Dvoretzky's theorem. \bigskip

Let $G=(g_{ij})$ be a $d\times n$ Gaussian random matrix, i.e. the $g_{ij}$'s are i.i.d $\mathcal{N}(0,1)$
Gaussian random variables. Since $(\textup{ker}\,G)^\perp = \textup{Im}\,(G^t) \subseteq \R^n$, $G^t$ being the transpose matrix of $G$,
and the columns of $G^t$ are independent and rotationally invariant random vectors in $\R^n$, then a random
subspace $E = (\textup{ker}\,G)^\perp$ has dimension $d$ a.s. and is distributed
according to the Haar probability measure $\mu$ on $G_{n,d}$. Therefore for any constant $C > 0$,
$$
\mu\{E\in G_{n,d}\,;\; L_{P_EB_1^n}<C\} =\Pu\{L_{P_E B_1^n}<C\}.
$$
Lemma~\ref{symm-polytope-equivalence} and the affine invariance of the isotropy constant imply
$L_{P_E B_1^n} = L_{\textup{conv}\,(\pm G{e_1}, \ldots, \pm G{e_n})}$ a.s.
Klartag and Kozma proved in~\cite{KK} that if $C$ is a sufficiently large absolute constant,
\[
  \Pu\{L_{\textup{conv}\,(\pm G{e_1}, \ldots, \pm G{e_n})} < C \} > 1-c_1e^{-c_2d}
\]
which completes the proof in the symmetric case.\bigskip

For the non-symmetric case, we proceed analogously. For a $d \times (n+1)$ Gaussian random matrix $G = (g_{ij})$,
take $\bar{G} = (g_{ij} - \frac{1}{n+1}\sum_{k=1}^{n+1} g_{ik})_{i \le d, j \le n+1}$. Since the sum of the columns
of $\bar{G}$ is zero, $(\textup{ker}\,\bar{G})^\perp = \textup{Im}\,(\bar{G}^t) \subseteq H \subseteq \R^{n+1}$.
Moreover, since rows of $\bar{G}$ (equivalently, columns of $\bar{G}^t$) are independent canonical Gaussian random vectors in $H$,
the random subspace $E = (\textup{ker}\,\bar{G})^\perp \subseteq H$ is distributed according to the Haar probability measure on $G_{H, d}$
(Grassmann manifold of $d$-dimensional subspaces of $H$).
Lemma~\ref{non-symm-polytope-equivalence} and the affine invariance of the isotropy constant imply
$L_{P_E \Delta_n} = L_{\textup{conv}\,(G{e_1}, \ldots, G{e_{n+1}})}$ a.s.
Since $P_E \Delta_n = P_E (P_H \Delta_n)$ and $P_H \Delta_n$ is an $n$-dimensional regular simplex (in $H$),
a non-symmetric counterpart of the result of Klartag and Kozma~\cite{KK},
\[
  \Pu\{L_{\textup{conv}\,(G{e_1}, \ldots, G{e_{n+1}})} < C \} > 1-c_1e^{-c_2d},
\]
finishes the proof.
\end{proof}

In the final part of the section we will use the tools from Section 2 to prove the main result. In particular, whenever $d\geq cn$ the boundedness of the isotropy constant holds not only for ``most'' projections of $B_1^n$ and $S_n$ but deterministically for all of them.

\bigskip

\begin{theorem}
  \label{isotropic-constant-of-proj-of-b1n-ball}
  Let $E \subseteq \R^n$ be a subspace of dimension $1 \le d \le n-1$ and $\mathcal{K} = P_E B_1^n$,
  $\mathcal{T} = P_E S_n$. Then
  \[
    L_{\mathcal{K}}, L_{\mathcal{T}} \le C \sqrt{n/d}
  \]
  where $C > 0$ is a universal constant.
\end{theorem}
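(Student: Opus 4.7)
The strategy is to combine the Milman--Pajor identity~(\ref{Milman-Pajor}) with the projection inequality~(\ref{betke:main-inequality}) from Section~2. Taking the trial parameters $a=0$, $T=I$ in~(\ref{Milman-Pajor}) and inserting~(\ref{betke:main-inequality}) yields, for $K=\mathcal{K}$ or $K=\mathcal{T}$,
\begin{equation*}
  dL_K^2 \le \frac{1}{\Vol_d(K)^{2/d}}\cdot\max_{F\in\tilde{\mathcal{F}}}\frac{1}{\Vol_d(F)}\int_F |y|^2\,\lambda_{\textup{aff} F}(dy),
\end{equation*}
where $\tilde{\mathcal{F}}$ is the family of $d$-faces supplied by Corollary~\ref{betke:main-corollary}. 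In both cases the faces $F$ are regular $d$-simplices whose $d+1$ vertices $v_0,\ldots,v_d$ form an orthonormal system: subsets of $\{\pm e_i\}_{i=1}^{n}$ for $B_1^n$, and of $\{e_i\}_{i=1}^{n+1}$ for $\Delta_n=\textup{conv}(e_1,\ldots,e_{n+1})$. A direct computation in barycentric coordinates gives $\frac{1}{\Vol_d(F)}\int_F|y|^2\,dy=\frac{\sum_j|v_j|^2+|\sum_j v_j|^2}{(d+1)(d+2)}$, and the right-hand side equals $\frac{2}{d+2}$ in both situations. The proof thus reduces to establishing a lower bound of the form $\Vol_d(K)^{2/d}\ge c/(nd)$.

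For $\mathcal{K}=P_EB_1^n$ this bound is immediate: the classical inclusion $B_1^n\supseteq n^{-1/2}B_2^n$ projects to $\mathcal{K}\supseteq n^{-1/2}B_2^d$, and combined with $\Vol_d(B_2^d)^{1/d}\sim d^{-1/2}$ it produces $\Vol_d(\mathcal{K})^{1/d}\ge c/\sqrt{nd}$, whence $L_{\mathcal{K}}\le C\sqrt{n/d}$.

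The nonsymmetric case is the critical step. I would realise $S_n$ as $\Delta_n-v_0\subseteq H_0\subseteq\R^{n+1}$ with $v_0=\frac{1}{n+1}(1,\ldots,1)$ and $H_0=v_0^{\perp}$, and take $E\subseteq H_0$. Then $P_E v_0=0$ forces $\mathcal{T}=P_E S_n=P_E\Delta_n$ and $0\in\mathcal{T}$. The key observation is that $\Delta_n$ and $-\Delta_n$ are opposite facets of $B_1^{n+1}$, so
\begin{equation*}
  P_E B_1^{n+1}=\textup{conv}\{\pm P_Ee_1,\ldots,\pm P_Ee_{n+1}\}=\textup{conv}(P_E\Delta_n\cup(-P_E\Delta_n))\subseteq\mathcal{T}-\mathcal{T},
\end{equation*}
the last inclusion using $0\in\mathcal{T}$ and convexity. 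The Rogers--Shephard inequality then gives $\Vol_d(\mathcal{T}-\mathcal{T})\le\binom{2d}{d}\Vol_d(\mathcal{T})\le 4^d\Vol_d(\mathcal{T})$, and the symmetric bound already proved applied to $B_1^{n+1}$ yields $\Vol_d(\mathcal{T})^{1/d}\ge c'/\sqrt{nd}$. Combining with the face-moment estimate gives $L_{\mathcal{T}}\le C\sqrt{n/d}$.

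The main obstacle is precisely this volume lower bound in the simplex case: the naive inradius estimate (which uses only $r(S_n)\sim R(S_n)/n$, as opposed to $r(B_1^n)\sim R(B_1^n)/\sqrt{n}$) would produce only $L_{\mathcal{T}}\le Cn/\sqrt{d}$, off by a factor of $\sqrt{n}$. The Rogers--Shephard detour through $B_1^{n+1}$, exploiting that $\Delta_n$ is a facet of $B_1^{n+1}$, is what recovers the correct order of magnitude.
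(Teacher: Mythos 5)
Your argument reproduces the paper's proof step for step: the Milman--Pajor reduction, the bound $\frac{2}{d+2}$ for the second moment on the $d$-faces via Corollary~\ref{betke:main-corollary} and~(\ref{betke:main-inequality}), the inradius bound $\Vol_d(\mathcal{K})^{1/d}\ge c/\sqrt{nd}$ for $P_E B_1^n$, and the Rogers--Shephard detour $\mathcal{T}-\mathcal{T}\supseteq\textup{conv}(\mathcal{T}\cup-\mathcal{T})=P_EB_1^{n+1}$ for the simplex. The only (correct and welcome) addition is that you make explicit the hypothesis $0\in\mathcal{T}$ needed for the last inclusion, which the paper leaves implicit.
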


\begin{proof}
As an immediate consequence of (\ref{Milman-Pajor}),
\begin{equation}
  \label{isotropic-constant-main-ineq}
  L_{\mathcal{K}}^2 \le \frac{1}{d} \frac{1}{\Vol_d(\mathcal{K})^{2/d}}
                        \frac{1}{\Vol_d(\mathcal{K})} \int_{\mathcal{K}} |x|^2 \, \lambda_E(dx).
\end{equation}
Applying~(\ref{betke:main-inequality}), we obtain the bound
\begin{equation}
  \label{proj-of-b1n-ball:integral-estimate}
  \frac{1}{\Vol_d(\mathcal{K})} \int_{\mathcal{K}} |x|^2 \, \lambda_E(dx) \le
  \frac{1}{\Vol_d(\Delta_d)} \int_{\Delta_d} |x|^2 \, \lambda_{\textup{aff}\,\Delta_d}(dy) =
  \frac{2}{d+2}
\end{equation}
(for the last equality see e.g.~\cite[Lemma 2.3]{KK}).
 To estimate $\Vol_d(\mathcal{K})$ note that $n^{-1/2} B_2^n \subseteq B_1^n$, so $n^{-1/2} (B_2^n \cap E) \subseteq P_E B_1^n$.
 Therefore
  \begin{equation}
    \label{proj-of-b1n-ball:volume-estimate}
    \Vol_d(\mathcal{K})^{1/d} \ge \frac{c}{\sqrt{n d}}.
  \end{equation}
  Combining these two, we get
  \[
    L_{\mathcal{K}}^2 \le \frac{1}{d} \frac{n d}{c^2} \frac{2}{d+2} \le C' \frac{n}{d}.
  \]

  In the case of the simplex it is convenient to embed $E$ and $S_n$ into $H$. More precisely, we take
  $S_n = \textup{conv}\{P_H e_i \colon i = 1, \ldots, n+1\} \subseteq H \subseteq \R^{n+1}$
  and assume $E \subseteq H$. Now observe that $\mathcal{T} = P_E S_n = P_E \Delta_n$
  so~(\ref{betke:main-inequality}) again yields
  \[
    \frac{1}{\Vol_d(\mathcal{T})} \int_{\mathcal{T}} |x|^2 \, \lambda_E(dx) \le \frac{2}{d+2}.
  \]
  To bound the volume radius of $\mathcal{T}$ from below, we use
  the Rogers-Shephard inequality~\cite{rogers-shephard}:
  \[
    {2d \choose d}^{-1} \Vol_d(\mathcal{T} - \mathcal{T}) \le \Vol_d(\mathcal{T}).
  \]
  Note that
  \[ \begin{split}
    \mathcal{T} - \mathcal{T} &\supseteq \textup{conv}(\mathcal{T} \cup -\mathcal{T}) =
      \textup{conv}(P_E \Delta_n \cup -P_E \Delta_n) \\
      &= P_E \big(\textup{conv}(\Delta_n \cup -\Delta_n)\big)
       = P_E B_1^{n+1}.
  \end{split} \]
  Combining with the estimate~(\ref{proj-of-b1n-ball:volume-estimate}),
  \[ \begin{split}
    \Vol_d(\mathcal{T})^{1/d} &\ge {2d \choose d}^{-1/d} \Vol_d(P_E B_1^{n+1})^{1/d} \\
      &\ge {2d \choose d}^{-1/d} \frac{c}{\sqrt{(n+1)d}} \ge \frac{c'}{\sqrt{nd}}.
  \end{split} \]
\end{proof}
Due to Lemma~\ref{non-symm-polytope-equivalence}, we immediately get the following:
\begin{corollary}\label{isotropy-constant-polytopes}
  Let $K \subseteq \R^d$ be a non-degenerated ($\textup{dim}\, K = d$) convex polytope with $n$ vertices. Then
  \[
    L_K \le C \sqrt{\frac{n}{d}}.
  \]
\end{corollary}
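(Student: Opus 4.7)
The plan is an immediate reduction to the non-symmetric case of Theorem~\ref{isotropic-constant-of-proj-of-b1n-ball} via the affine-equivalence lemma already proved in Section~2. Given a non-degenerated $d$-dimensional polytope $K \subseteq \R^d$ with $n$ vertices, non-degeneracy forces $n \ge d+1$, so $K = \textup{conv}\{v_1,\ldots,v_n\}$ where the set of vertices has at least $d+1$ elements. I would then invoke Lemma~\ref{non-symm-polytope-equivalence} with its index ``$n+1$'' taken to be our $n$ (equivalently its ``$n$'' equal to $n-1$, which satisfies $n-1 \ge d$); this produces a $d$-dimensional subspace $E \subseteq \R^n$ so that $K$ is affinely equivalent to the orthogonal projection $P_E S_{n-1}$ of the regular $(n-1)$-simplex onto $E$.

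Since the isotropy constant is an affine invariant, $L_K = L_{P_E S_{n-1}}$, and the simplex half of Theorem~\ref{isotropic-constant-of-proj-of-b1n-ball} immediately yields
\[
  L_K = L_{P_E S_{n-1}} \le C\sqrt{\frac{n-1}{d}} \le C\sqrt{\frac{n}{d}},
\]
which is the desired inequality.

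There is essentially no obstacle here: the geometric content lives in Section~2 (the Betke-type integration formula feeding into (\ref{betke:main-inequality})) and is packaged into Theorem~\ref{isotropic-constant-of-proj-of-b1n-ball} via the Rogers--Shephard trick and the trivial lower bound $n^{-1/2} B_2^n \subseteq B_1^n$. The only items to check at the corollary stage are the index bookkeeping ($n \ge d+1$, which is forced by $\textup{dim}\,K = d$) and the observation that the non-symmetric case alone suffices to cover every polytope, although for a centrally symmetric polytope with $2m$ vertices one could alternatively route through $P_E B_1^m$ using Lemma~\ref{symm-polytope-equivalence} to obtain the same conclusion with a slightly better constant.
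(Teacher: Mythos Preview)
Your proposal is correct and is exactly the paper's argument: the paper simply writes ``Due to Lemma~\ref{non-symm-polytope-equivalence}, we immediately get the following,'' and your write-up spells out precisely this reduction (affine invariance of $L_K$ plus the simplex half of Theorem~\ref{isotropic-constant-of-proj-of-b1n-ball}). One tiny bookkeeping remark: Lemma~\ref{non-symm-polytope-equivalence} lives in Section~3, not Section~2, and in the boundary case $n=d+1$ (so $K$ is itself a simplex) the theorem is invoked with $d$ equal to the ambient dimension, which is formally outside its stated range $1\le d\le n-1$; this is harmless since then $L_K=L_{S_d}\le C$ directly.
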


\section{Isotropy constant of projections of random polytopes}

In this section we consider polytopes generated by the convex hull of vertices randomly chosen on the $S^{n-1}$. The main result is

\begin{proposition}
  \label{prop-4-1}
There exist absolute constants $C$, $c_1$ and $c_2$, such that if $m \ge n$, $\{P_i\}_{i=0}^m$ are independent random vectors on $S^{n-1}$ and $K=\textrm{conv}\{\pm P_1,\dots,\pm P_m\}$ or $K=\textrm{conv}\{P_0,\dots, P_m\}$, then
$$
\Pu\{L_{P_E K}\leq C\sqrt{\frac{n}{d}} \;\; \forall E\in G_{n,d}\;\forall \, 1 \le d \le n-1 \} \geq 1-c_1e^{-c_2n}.
$$

\end{proposition}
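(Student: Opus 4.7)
The plan is to adapt the strategy of Theorem~\ref{isotropic-constant-of-proj-of-b1n-ball}, exploiting the crucial fact that every vertex of $K$ lies on $S^{n-1}$ and hence $K \subseteq B_2^n$, so every $y \in K$ satisfies $|y| \le 1$. The argument naturally splits into a \emph{deterministic} upper bound for $L_{P_E K}^2$ in terms of $\Vol_d(P_E K)$, and a \emph{probabilistic} uniform lower bound on that volume.

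For the deterministic step I fix an arbitrary $E \in G_{n,d}$ and apply (\ref{Milman-Pajor}) with $T$ the identity and $a = 0$ to the body $P_E K \subseteq E$, obtaining
\[
d\,L_{P_E K}^2 \le \frac{1}{\Vol_d(P_E K)^{2/d+1}} \int_{P_E K} |x|^2 \, \lambda_E(dx).
\]
Applying (\ref{betke:main-inequality}) and using that every $d$-face $F \in \tilde{\mathcal{F}}_d$ satisfies $F \subseteq K \subseteq B_2^n$ gives
\[
\frac{1}{\Vol_d(P_E K)} \int_{P_E K} |x|^2 \, \lambda_E(dx) \le \max_{F \in \tilde{\mathcal{F}}_d} \frac{1}{\Vol_d(F)} \int_F |y|^2 \, \lambda_{\textup{aff}\,F}(dy) \le 1,
\]
so that
\[
L_{P_E K}^2 \le \frac{1}{d\,\Vol_d(P_E K)^{2/d}}.
\]
This inequality is completely deterministic: it holds for every realization of $K$ with vertices on $S^{n-1}$, every subspace $E$, and every $d$.

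The proof then reduces to a uniform probabilistic volume estimate of the form
\[
\Vol_d(P_E K)^{1/d} \ge \frac{c}{\sqrt{n}} \qquad \forall E \in G_{n,d},\ \forall d \in \{1, \ldots, n-1\},
\]
holding with probability at least $1 - c_1 e^{-c_2 n}$; given this, the previous display immediately yields $L_{P_E K}^2 \le n/(c^2 d)$. To prove such a bound I would follow the approach of \cite{A}. For any indices $i_0, \ldots, i_d$, $P_E K$ contains the simplex spanned by $P_E P_{i_0}, \ldots, P_E P_{i_d}$, whose $d$-volume equals $\tfrac{1}{d!}\lvert\det[P_E(P_{i_j} - P_{i_0})]_{j=1}^d\rvert$. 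A Gram determinant computation together with concentration of measure on $S^{n-1}$ shows that for $d+1$ independent random vertices this determinant is of order $(c\sqrt{d/n})^d$ with probability exponentially close to $1$, so its $d$-th root has the desired order $1/\sqrt{n}$.

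The main obstacle is making this bound uniform in $E$ (and also in $d$) while preserving the $e^{-c_2 n}$ probability. I expect the argument to proceed via an $\varepsilon$-net of $G_{n,d}$ of cardinality $(C/\varepsilon)^{nd}$, a Lipschitz estimate for $E \mapsto \Vol_d(P_E K)$, a good choice of ``generic'' index tuples $(i_0, \ldots, i_d)$ among the $m \ge n$ available vertices (one must still find a well-conditioned simplex when $m$ is only slightly larger than $n$), and finally a union bound over $d \in \{1, \ldots, n-1\}$, all of which must be dominated by the pointwise exponential deviation estimate. The case $m = n$ is the tightest and drives the constants; the non-symmetric case is treated analogously after re-centering at the empirical mean.
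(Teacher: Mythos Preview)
Your deterministic reduction $L_{P_E K}^2 \le 1/\bigl(d\,\Vol_d(P_E K)^{2/d}\bigr)$ is correct, but the proposed route to the uniform volume lower bound has a real gap. An $\varepsilon$-net of $G_{n,d}$ has cardinality of order $(C/\varepsilon)^{d(n-d)}$; for $d$ proportional to $n$ this is $e^{c n^2}$, whereas the concentration available from $m\ge n$ random points on $S^{n-1}$ yields failure probabilities no better than $e^{-cn}$ per configuration. No union bound over subspaces can close that gap, and a Lipschitz argument does not rescue it. In fact the target inequality $\Vol_d(P_E K)^{1/d}\ge c/\sqrt{n}$ uniformly in $E$ is already suspect: for the deterministic realization $K=B_1^n$ (vertices on $S^{n-1}$) and $E=\mathrm{span}(e_1,\ldots,e_d)$ one gets $\Vol_d(P_E K)^{1/d}\sim 1/d$, which for $d\sim n$ is far below $1/\sqrt{n}$. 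So the volume bound you aim for cannot hold deterministically and would need its own high-probability argument, which the net approach does not provide.

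The paper allocates the two factors the opposite way. For the volume it uses only the \emph{inradius}: with probability $\ge 1-e^{-n}$ (in the range $cn\le m\le ne^{n/2}$) one has $r(K)\ge c\sqrt{\log(m/n)/n}$, and then $r(K)B_2^n\subseteq K$ gives $\Vol_d(P_E K)^{1/d}\ge c\,r(K)/\sqrt{d}$ \emph{simultaneously for all $E$ and all $d$}, with no net needed. This is weaker than your target by a factor $\sqrt{d}$, so the crude bound $\frac{1}{\Vol_d(F)}\int_F|y|^2\le 1$ no longer suffices. The paper recovers the missing $1/d$ on the face side: using the identity
\[
\frac{1}{\Vol_d(F)}\int_F|y|^2\,d\lambda=\frac{2}{d+2}+\frac{1}{(d+1)(d+2)}\sum_{i\ne j}\langle Q_i,Q_j\rangle
\]
for a simplex $F$ with vertices $Q_1,\ldots,Q_{d+1}\in S^{n-1}$, together with a Bernstein-type inequality, one gets $\frac{1}{\Vol_d(F)}\int_F|y|^2\le \frac{C}{d}\log(m/n)$ with failure probability $\le e^{-cn\log(m/n)}$. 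Crucially, the union bound here runs only over the $d$-faces of $K$ (at most $\binom{2m}{d+1}$ of them) and over $d$, \emph{not} over $G_{n,d}$; this is what makes the probabilities match. The two $\log(m/n)$ factors then cancel, giving $L_{P_E K}^2\le Cn/d$. The regime $m<cn$ is handled separately and deterministically via Corollary~\ref{isotropy-constant-polytopes}, and $m>ne^{n/2}$ via $r(K)\ge 1/4$.
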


The proof follows \cite{A}.  We shall only sketch the main ideas as the technical computations can be found in that reference.

\begin{proof}[Sketch of the proof]
Let $E \subseteq \R^n$ denotes an $d$-dimensional subspace. The ideas in what follows will give us the proof for $m\geq cn$ with
an absolute constant $c$. If $m<cn$, Corollary~\ref{isotropy-constant-polytopes} gives deterministically $L_{P_E K}\leq C\sqrt{\frac{m}{d}}\leq
C'\sqrt{\frac{n}{d}}$.\medskip

Apply once again (1.1). Writing $r(K)$ the inradius of $K$ and using the inequality~(\ref{betke:main-inequality}), the main consequence of Proposition~\ref{betke:main-proposition}, we obtain that for any polytope $K\subset\R^n$ and any $d$-dimensional subspace $E$,
$$
L_{P_E K}^2\leq\frac{C}{r(K)^2} \max_{F \in \mathcal{F}_d(K)}
\frac{1}{\Vol_d(F)}\int_F |x|^2 \,\lambda_{\textup{aff} F}(dx).
$$

\medskip
When $K$ is the symmetric convex hull of $m$ independent random points in $S^{n-1}$, it was proved in \cite[Lemma 3.1]{A},
that for some constant $c$ such that $cn\leq m\leq ne^\frac{n}{2}$,
$$
\Pu\left\{r(K)<\frac{1}{2\sqrt{2}}\sqrt{\frac{\log\frac{m}{n}}{n}}\right\}\leq e^{-n}.
$$
The same proof gives the statement in the non-symmetric case.\medskip

On the other hand, with probability 1, each $d$-dimensional face
of $K$ is a simplex ${F}=\textrm{conv}\{Q_1,\dots,Q_{d+1}\}$ with
$Q_i=\varepsilon_i P_{j_i}$ (or just $Q_i = P_{j_i}$ in the non-symmetric case) where $1 \le j_1 < \cdots < j_{d+1} \le m$ and
$\varepsilon_i \in \{-1,1\}$. The same proof as in \cite{KK} and \cite{A} shows that with probability 1 we have
\begin{equation}
  \label{random-polytopes:identity-for-faces}
  \frac{1}{\Vol_d({F})}\int_F |x|^2 \,\lambda_{\textup{aff} F}(dx) = \frac{2}{d+2}+\frac{1}{(d+1)(d+2)}\sum_{i_1\neq i_2}^{d+1}\langle Q_{i_1},Q_{i_2}\rangle.
\end{equation}
%
In order to give a bound for this quantity for a fixed $F \in \mathcal{F}_d(K)$ we proceed in the same way as in \cite[Theorem 3.1]{A}, by using a version of Bernstein's inequality as stated in \cite{BLM}. We thus obtain
\begin{equation}
  \label{random-bodies:estimate-for-single-face}
\Pu\left\{\sum_{i_1\neq i_2}^{d+1}\langle Q_{i_1},Q_{i_2}\rangle>\epsilon(d+1)\right\}\leq2e^{-c\epsilon n}
\end{equation}
for every $\epsilon>\epsilon_0$, where $\epsilon_0$ is an absolute constant.\medskip

Now, for each $F \in \mathcal{F}_d(K)$ let $Q^F_1, \ldots, Q^F_{d+1}$ be vertices of $F$. Applying (\ref{random-bodies:estimate-for-single-face})
and the union bound over $\mathcal{F}_d(K)$ (whose cardinality is clearly bounded by ${2m \choose d+1}$), we obtain
for $\epsilon\log\frac{m}{n}>\epsilon_0$,
\begin{multline*}
\Pu\left\{\max_{F \in \mathcal{F}_d(K)}\sum_{i_1\neq i_2}^{d+1}\langle Q^F_{i_1},Q^F_{i_2} \rangle  >  \epsilon(d+1)\log\frac{m}{n}\right\} \\
  \leq   {2m \choose d+1} 2e^{-c\epsilon n\log\frac{m}{n}}
  \leq 2e^{-c\epsilon n\log\frac{m}{n}+(d+1)\log\frac{2em}{d+1}} \\
  \leq   2e^{-c\epsilon n\log\frac{m}{n}+n\log\frac{2em}{n}},
\end{multline*}
since the function $x\log\frac{C}{x}$ is increasing when $\frac{C}{x}>e$. Consequently, by the union bound over $d$,
\begin{multline*}
  \Pu\Big\{\exists \, 1\le d \le n-1 \ \textrm{s.t.} \
    \max_{F \in \mathcal{F}_d(K)}\sum_{i_1\neq i_2}^{d+1}\langle Q^F_{i_1},Q^F_{i_2} \rangle > \epsilon(d+1)\log\frac{m}{n}\Big\} \\
  \leq 2e^{-c\epsilon n\log\frac{m}{n}+n\log\frac{2em}{n}+\log n}
\end{multline*}

\medskip
Since $m\geq cn$, considering the complement set and using~(\ref{random-polytopes:identity-for-faces}), we can fix $\epsilon > 0$
a large enough numerical constant to obtain

\begin{multline*}
  \Pu\Big\{ \forall \, 1 \le d \le n-1 \ \max_{F \in \mathcal{F}_d(K)}\frac{1}{\Vol_d({F})}
   \int_F|x|^2 \,\lambda_{\textup{aff} F}(dx) \leq \frac{C}{d}\log\frac{m}{n} \Big\} \\ \ge
    1-2e^{-cn\log\frac{m}{n}}
\end{multline*}

\medskip
Thus, there exist constants $c, C > 0$ such that if $cn\leq m\leq ne^\frac{n}{2}$ then the set of points $(P_1,\dots,P_m)$ for which
the inequality $L_{P_E K}\leq C\sqrt\frac{n}{d}$ holds for every $d$-dimensional subspace $E$ and for every $1 \le d \le n-1$
has probability greater than $1-2e^{-cn\log\frac{m}{n}}-e^{-n}>1-c_1e^{-c_2n}$.

\medskip
In case $m>ne^\frac{n}{2}$, for $n$ large enough, $r(K)\geq\frac{1}{4}$ with probability greater than $1-e^{-n}$ so with this probability
$$
dL_{P_E K}^2\leq
\frac{1}{\Vol_d(P_E K)^\frac{2}{d}}\frac{1}{\Vol_d(P_E K)}\int_{P_E K}|x|^2dx\leq\frac{1}{\Vol_d(\frac{1}{4}B_2^d)^\frac{2}{d}}\leq
cd
$$
and the proof is complete.
\end{proof}

\section{A general result}

In this section we prove a general relation between the isotropy
constant of the hyperplane projections of an isotropic convex body
and of the body itself.
\medskip

\begin{corollary}\label{AlonsoMP} Let $K$ be an isotropic convex
body and let $H$ be a hyperplane. Then
$$
L_{P_H K}\sim L_K
$$
\end{corollary}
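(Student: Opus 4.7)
The plan is to reduce the projection statement to the Milman--Pajor theorem on hyperplane sections via Steiner symmetrization. I may assume $K$ is in isotropic position with $\Vol_n(K)=1$ and $H=e_n^\perp$; write $u=e_n$, and for $x\in P_H K$ denote by $h(x)=\Vol_1(K\cap(x+\R u))$ the chord length of $K$ over $x$ and by $m(x)$ its midpoint.

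First I would introduce the Steiner symmetrization $K'$ of $K$ with respect to $H$, obtained by replacing each chord of $K$ parallel to $u$ by the centered interval of the same length. Then $K'$ is a convex body of volume $1$, symmetric under reflection in $H$, with the same projection $P_H K'=P_H K$. Because a centered chord of length $h(x)$ meets $H$ exactly at $x$, one also has the key geometric identity
\[
  K'\cap H \;=\; P_H K,
\]
which realises the projection $P_H K$ as a central section of $K'$, connecting the problem to the object controlled by \cite{MP}.

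The next step is to compare $L_{K'}$ with $L_K$. By Fubini, for every unit $v\in H$ one has $\int_{K'}\langle y,v\rangle^2 dy = \int_{P_H K} h(x)\langle x,v\rangle^2 dx = \int_K\langle y,v\rangle^2 dy = L_K^2$, so the inertia matrices of $K$ and $K'$ coincide on $H$. In the direction $u$ the second moment drops to $\alpha^2:=\int_{K'}\langle y,u\rangle^2 dy = \int_{P_H K} h(x)^3/12\, dx \le L_K^2$, and the drop equals the weighted midpoint variance $L_K^2-\alpha^2 = \int_{P_H K} h(x) m(x)^2 dx$, where $\int hm\,dx=0$ by isotropy. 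Hence the inertia matrix of $K'$ is $\operatorname{diag}(L_K^2,\dots,L_K^2,\alpha^2)$, which gives $L_{K'}^2 = L_K^{2-2/n}\alpha^{2/n}$. Therefore $L_{K'}\sim L_K$ follows as soon as one knows $\alpha\ge cL_K$ with $c>0$ absolute (in fact even $\alpha\ge c^n L_K$ is enough, thanks to the $1/n$-th power), which is equivalent to the midpoint variance bound $\int h m^2 dx\le (1-c)L_K^2$. Establishing this bound is the technical core and, following \cite{BKM}, is done by a one-dimensional Steiner-symmetrization-type argument exploiting Brunn's log-concavity of the marginal of $K$ in direction $u$, the concavity of $h$ on $P_H K$, and the barycenter constraint $\int hm\,dx=0$.

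Once $L_{K'}\sim L_K$ is in hand, the proof concludes by invoking \cite{MP}. Pick $T=\operatorname{diag}(a,\dots,a,b)\in SL(n)$ with $a^{n-1}b=1$ and $aL_K=b\alpha$; then $T$ preserves $H$ and renders $TK'$ isotropic. Affine invariance yields $L_{(TK')\cap H}=L_{K'\cap H}=L_{P_H K}$ and $L_{TK'}=L_{K'}$, while \cite{MP} applied to the isotropic body $TK'$ at the central hyperplane $H$ gives $L_{(TK')\cap H}\sim L_{TK'}$. Chaining,
\[
  L_{P_H K} \;=\; L_{(TK')\cap H} \;\sim\; L_{TK'} \;=\; L_{K'} \;\sim\; L_K,
\]
which is the claim. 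The main obstacle is the midpoint variance estimate above; it is this step that yields the improved numerical constants referred to in the introduction.
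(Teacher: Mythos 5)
Your overall strategy coincides with the paper's: Steiner-symmetrize $K$ about $H$ to get $K'$, observe $K'\cap H = P_H K$, compute the inertia matrix of $K'$ to obtain $L_{K'}^2 = L_K^{2-2/n}\alpha^{2/n}$ with $\alpha^2=\int_{K'}\langle y,e_n\rangle^2\,dy\le L_K^2$, reduce to a lower bound on $\alpha/L_K$, and finish via the Milman--Pajor theorem for central hyperplane sections. Your Fubini computations, the identity $L_K^2-\alpha^2=\int_{P_H K}h(x)m(x)^2\,dx$, the observation that any polynomially weak lower bound on $\alpha/L_K$ suffices after the $1/n$-th power, and the normalization $T=\operatorname{diag}(a,\dots,a,b)$ to invoke \cite{MP} (the paper simply appeals to affine invariance of $L$ instead) are all correct.

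The gap is exactly where you flag the technical core: you do not prove the lower bound on $\alpha$. Deferring to ``a one-dimensional Steiner-symmetrization-type argument following \cite{BKM} exploiting Brunn's log-concavity, the concavity of $h$, and the barycenter constraint'' is not a proof --- it is unclear which quantitative statement of \cite{BKM} you are invoking, whether it applies after a single symmetrization, and whether it yields a bound of the form $\alpha\ge c^n L_K$. As it stands, the proposal establishes everything except the one inequality on which the conclusion rests.

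The paper closes this gap by a different, more concrete route that you should know. Hensley's theorem states that for any centered convex body $K_1$ of volume one, $\Vol_{n-1}(K_1\cap H)\sim\bigl(\int_{K_1}\langle x,e_n\rangle^2\,dx\bigr)^{-1/2}$. Applying this to both $K$ and $K'$ turns the moment ratio into a volume ratio,
\[
\frac{\alpha}{L_K}\ \gtrsim\ \frac{\Vol_{n-1}(K\cap H)}{\Vol_{n-1}(K'\cap H)}\ =\ \frac{\Vol_{n-1}(K\cap H)}{\Vol_{n-1}(P_H K)},
\]
and this ratio of a central section to a projection is bounded below using elementary inequalities from \cite{P}, namely $\Vol_1(P_E K)\,\Vol_{n-1}(K\cap H)\ge c_1\Vol_n(K)$ and $\frac1n\Vol_{n-1}(P_H K)\,\Vol_1(K\cap E)\le\Vol_n(K)$, together with the standard isotropic bounds $R(K)\le(n+1)L_K$ and $r(K)\ge L_K$. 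This yields $\alpha\gtrsim L_K/n^2$, ample after the $1/n$-th power, and produces the improved constants advertised in the introduction. To complete your proof, either carry out and verify the BKM-style estimate in detail, or adopt the Hensley-plus-volume-inequalities route above.
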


Its proof relies on the next Proposition which improves the
numerical constants appearing in a more general statement in
\cite{BKM} for the case of projections onto hyperplanes.

\begin{proposition}
Let $K\subset\R^n$ be an isotropic convex body and let
$H=\nu^\bot$ be a hyperplane. If $S(K)$ is the Steiner
symmetrization of $K$ with respect to $H$ then,
$$
\Big(1-c\ \frac{\log n}{n}\Big)L_K\le L_{S(K)}\le L_K
$$ for some numerical constant $c>0$.
\end{proposition}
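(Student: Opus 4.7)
The plan is to analyze the inertia matrix of $S(K)$ and derive an explicit formula for $L_{S(K)}/L_K$, from which the upper bound is immediate and the lower bound reduces to a one-dimensional moment estimate.

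For each $x \in P_H K$, write the $\nu$-fiber as $K \cap (x + \R\nu) = \{x + t\nu : a(x) \le t \le b(x)\}$, with length $\ell(x) = b(x)-a(x)$ and midpoint $m(x) = (a(x)+b(x))/2$. Steiner symmetrization replaces each fiber by the centered interval $[-\ell(x)/2, \ell(x)/2]\cdot\nu + x$, so $S(K)$ has volume $1$, is symmetric about $H$, and has the same $H$-marginal as $K$. Consequently, for $\theta, \theta' \in H$ one has $\int_{S(K)} \langle y,\theta\rangle \langle y,\theta'\rangle\,dy = L_K^2\langle\theta,\theta'\rangle$; the reflection symmetry makes cross-terms with $\nu$ vanish and gives $\int_{S(K)} \langle y,\nu\rangle^2\,dy = \int_H \ell(x)^3/12\,dx =: \sigma^2$, while the barycenter of $S(K)$ lies at $0$. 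The elementary identity $\int_a^b t^2\,dt = m^2\ell + \ell^3/12$, together with the isotropy condition $\int_H m\ell\,dx = \int_K \langle x,\nu\rangle\,dx = 0$, then yields
\[
L_K^2 = \sigma^2 + \int_H m(x)^2\,\ell(x)\,dx,
\]
so in particular $\sigma \le L_K$.

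The inertia matrix of $S(K)$ is diagonal in the decomposition $\R^n = H \oplus \R\nu$ with eigenvalues $L_K^2$ (multiplicity $n-1$) and $\sigma^2$, hence the optimizer in (\ref{Milman-Pajor}) can be taken in the form $T = \mathrm{diag}(s,\ldots,s,t) \in SL(n)$. Minimizing $(n-1)s^2 L_K^2 + t^2\sigma^2$ under the constraint $s^{n-1}t = 1$ via Lagrange multipliers yields the extremal value $n L_K^{2(n-1)/n}\sigma^{2/n}$ (attained at $sL_K = t\sigma$), so
\[
L_{S(K)} = \sigma^{1/n}\, L_K^{(n-1)/n}, \qquad \frac{L_{S(K)}}{L_K} = \left(\frac{\sigma}{L_K}\right)^{1/n}.
\]
The upper bound $L_{S(K)} \le L_K$ is immediate from $\sigma \le L_K$, and the lower bound $(1 - c\log n/n)L_K \le L_{S(K)}$ is equivalent, after raising to the $n$th power and taking logarithms, to $\sigma \ge L_K/n^{c'}$ for some absolute constant $c'$.

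The main obstacle is this polynomial lower bound $\sigma \ge L_K/n^{O(1)}$. A H\"older estimate on $\int_H \ell^3\,dx$ using $\int_H \ell\,dx = 1$ gives $\sigma^2 \ge 1/(12\,\Vol_{n-1}(P_H K)^2)$, which is effective precisely when the hyperplane projection $P_H K$ is not too large. In the complementary regime, one invokes the isotropy-induced orthogonality relations $\int_H m\ell\,dx = 0$ and $\int_H x_i\, m\ell\,dx = 0$ (the latter coming from $\int_K \langle x,\nu\rangle \langle x,e_i\rangle\,dx = 0$ for $e_i \in H$): these force $m$ to be orthogonal, in $L^2(\ell\,dx)$, to the $n$-dimensional span of constants and coordinate functions, and this orthogonality, combined with Klartag's estimate $L_K \le Cn^{1/4}$ and classical bounds on volumes of hyperplane projections of isotropic convex bodies, produces the required polynomial bound. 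The resulting numerical constant is sharper than in \cite{BKM} because the optimizer in (\ref{Milman-Pajor}) is computed directly on the diagonalized covariance rather than via an intermediate centering step.
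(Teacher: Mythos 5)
Your setup matches the paper's: the inertia matrix of $S(K)$ is diagonal in the $H\oplus\R\nu$ decomposition with eigenvalues $L_K^2$ and $\sigma^2:=\int_{S(K)}\langle y,\nu\rangle^2\,dy=\tfrac1{12}\int_H\ell^3$, the formula $L_{S(K)}=(\sigma/L_K)^{1/n}L_K$ is right, and $\sigma\le L_K$ gives the upper bound. Your H\"older estimate $\sigma^2\ge 1/(12\,\Vol_{n-1}(P_HK)^2)$ is correct and is in fact a cleaner substitute for one of the paper's two invocations of Hensley's theorem (it is the easy direction of Hensley, proved directly for the $H$-symmetric body $S(K)$).

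The genuine gap is what follows. You declare this bound effective only when $\Vol_{n-1}(P_HK)$ is "not too large" and set up a "complementary regime" that you treat via orthogonality of $m$ to constants and coordinate functions in $L^2(\ell\,dx)$, Klartag's $L_K\le Cn^{1/4}$, and unspecified classical projection-volume estimates. As stated this is not an argument: orthogonality of $m$ to an $n$-dimensional subspace controls $\int m^2\ell$ only in vacuous ways and says nothing about $\int\ell^3$, which is what you need bounded below. More importantly, the complementary regime does not exist. The classical inequality $\Vol_{n-1}(P_H K)\,\Vol_1(K\cap\R\nu)\le n\,\Vol_n(K)=n$ (Pisier's book; used in this paper as well), together with $\Vol_1(K\cap\R\nu)\ge 2r(K)\ge 2L_K$ for an isotropic body, gives $\Vol_{n-1}(P_HK)\le n/(2L_K)$ unconditionally. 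Substituting into your H\"older bound yields $\sigma\ge L_K/(n\sqrt{3})$, hence $L_{S(K)}\ge (n\sqrt{3})^{-1/n}L_K\ge(1-c\log n/n)L_K$, with no case split and no appeal to $L_K\le Cn^{1/4}$. The paper's own proof is similar in spirit but applies Hensley to both $K$ and $S(K)$ to replace $\sigma/L_K$ by $\Vol_{n-1}(K\cap H)/\Vol_{n-1}(P_HK)$, and then bounds each volume via $R(K)\le (n+1)L_K$ and $r(K)\ge L_K$; your route via $\int\ell^3$, once completed with the missing projection bound, is marginally more direct.
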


\begin{proof}
Without loss of generality, we may assume that
$\nu=e_n=(0,\dots,0,1)$. Write $E=\langle e_n\rangle$ for the
1-dimensional subspace generated by $e_n$. The Steiner
symmetrization of $K$ is defined by
$$
S(K):=\left\{(y,t)\in\R^{n-1}\times\R\, ; \; y\in
P_H K,|t|\leq\frac{1}{2} \Vol_1(K\cap(x+E)) \right\}.
$$
Clearly $P_H K = P_H S(K) = S(K) \cap H$.

\bigskip
Now we study the inertia matrix of $S(K)$. First notice that for $x \in P_H K$,
$\Vol_1(K \cap (x + E)) = \Vol_1(S(K) \cap (x + E))$. For every $\theta \in S^{n-1} \cap H$, Fubini's theorem yields
\[ \begin{split}
  \int_{S(K)} \langle x, \theta \rangle^2 \,dx &= \int_{P_H K} \int_{S(K) \cap (x + E)} \langle y + t e_n, \theta \rangle^2 \,dt\,dy \\
    &= \int_{P_H K} \langle y, \theta \rangle^2 \Vol_1(K \cap (x + E)) \,dy = L_K^2.
\end{split} \]
Using the fact that $\int_{S(K) \cap (x + E)} t \, dt = 0$ for $x \in P_H K$, in the similar fashion we show that for every $\theta \in S^{n-1} \cap H$,
\[
  \int_{S(K)} \langle x, \theta \rangle \langle x, e_n \rangle \,dx = 0.
\]
Also $\int_{S(K) \cap (x + E)} t^2 \, dt \le \int_{K \cap (x + E)} t^2 \, dt$ for $x \in P_H K$, thus
\[
  \int_{S(K)} \langle x, e_n \rangle^2 \,dx \le \int_{K} \langle x, e_n \rangle^2 \,dx = L_K^2.
\]
Taking $\sigma > 0$ such that
$\sigma^2 := \int_{S(K)} \langle x, e_n \rangle^2 \,dx / L_K^2$, we obtain that the inertia matrix of $S(K)$ is
\[
  M = L_K^2 \begin{pmatrix} 1& & & \\ &\ddots& & \\ & & 1&\\ & & & \sigma^2 \end{pmatrix}.
\]
The volume of $S(K)$ is 1, so $L_{S(K)} = (\textup{det} M)^{1/2n}$ (see~\cite{MP}), which means
\begin{equation}\label{steiner}
  L_{S(K)} = \sigma^{1/n} L_K = \left(\frac{\left(\int_{S(K)}\langle
x,e_n\rangle^2\,dx\right)^{1/2}}{L_K}\right)^{1/n} L_K.
\end{equation}
Since $\sigma \le 1$, we obtain $L_{S(K)} \le L_K$.

\bigskip
A well-known fact due to Hensley~\cite{H} states that
$\Vol_{n-1}(K_1 \cap H) \sim \left( \int_{K_1}\langle x,e_n\rangle^2 \,dx \right)^{-1/2}$
for any convex body $K_1$ with volume 1 and center of mass at the origin. Using this fact for $K$ and $S(K)$ in~(\ref{steiner})
we obtain that for some absolute constant $c > 0$,
$$
L_{S(K)}\geq\left(c\frac{\Vol_{n-1}(K\cap H)}{\Vol_{n-1}(S(K)\cap H)}\right)^{1/n} L_K
= \left(c\frac{\Vol_{n-1}(K\cap H)}{\Vol_{n-1}(P_H K)}\right)^{1/n} L_K.
$$
Now we use the following inequalities:
\[ \begin{split}
\Vol_1(P_E K) \Vol_{n-1}(K\cap H) &\geq c_1 \Vol_n(K), \\[1ex]
\frac{1}{n}\Vol_{n-1}(P_H K) \Vol_1(K\cap E) &\leq \Vol_n(K) = 1.
\end{split} \]
(For the proof, see for instance~\cite[Lemma 8.8]{P} where the first inequality is proved for a symmetric body $K$ with $c_1 = 1$ and the non-symmetric case can be proved similarly with an absolute constant $c_1 > 0$. The proof of the second inequality given in~\cite{P} works in the non-symmetric case.)
They yield
$$
\Vol_{n-1}(K\cap H) \geq \frac{c_1}{\Vol_1(P_E K)}\geq\frac{c_1}{2R(K)}
$$
and
$$
\Vol_{n-1}(P_H K)\leq\frac{n}{\Vol_1(K \cap E)}\leq\frac{n}{2r(K)}
$$
where $R(K)$ and $r(K)$ are the circumradius and the inradius of $K$ respectively.

\bigskip
Since every isotropic convex body verifies $R(K)\leq(n+1)L_K$ and $r(K)\geq L_K$
(see~\cite{G} or~\cite{KLS}, for instance) we obtain
\[
  L_{S(K)} \ge \left(\frac{c c_1}{n(n+1)}\right)^{1/n} L_K.
\]
\end{proof}

\noindent\emph{Proof of Corollary \ref{AlonsoMP}}. Since
$P_H K = S(K)\cap H$  we have
$$
L_{P_H K}=L_{S(K)\cap H}\sim L_{S(K)}\sim L_K
$$
where the first equivalence is the corresponding one for sections of convex bodies
as proved in~\cite{MP}.\hfill$\Box$

\subsection*{Acknowledgements}

The fourth named author would like to thank Prof. Stanis\l{}aw Szarek for some useful discussions.

Part of this work has been done while the fourth named author enjoyed the hospitality of University of Zaragoza
staying there as Experienced Researcher of the European Network PHD.

\bigskip


\end{document}